\numberwithin{equation}{section}
\def\hb{\hbar}
\def\sO{{\mathscr O}}
\def\sO{\mathscr{O}}
\newcommand{\CC}{\mathbb{C}}
\newcommand{\EE}{\mathbb{E}}
\newcommand{\PP}{\mathbb{P}}
\newcommand{\QQ}{\mathbb{Q}}
\newcommand{\ZZ}{\mathbb{Z}}
\newcommand{\FF}{\mathbb{F}}
\newcommand{\TT}{\mathbb{T}}
\def\fR{{\mathfrak R}}
\def\cA{{\mathcal A}}
\def\AA{\mathbb A}
\newcommand{\cal}{\mathcal}
\def\cA{{\cal A}}
\def\cB{{\cal B}}
\def\cC{{\cal C}}
\def\cF{{\cal F}}
\def\cM{{\cal M}}
\def\cN{{\cal N}}
\def\cP{{\cal P}}
\def\cV{{\cal V}}
\def\cS{{\cal S}}
\def\cY{{\cal Y}}
\def\cZ{{\cal Z}}
\def\fC{\mathfrak{C}}
\def\fD{\mathfrak{D}}
\def\lra{\longrightarrow}
\newcommand{\si}{\sigma}
\def\begeq{\begin{equation}}
\def\endeq{\end{equation}}
\def\and{\quad{\rm and}\quad}
\def\and{\quad\text{and}\quad}
 \DeclareMathOperator{\val}{val}
\DeclareMathOperator{\Ver}{{Ver}}
\DeclareMathOperator{\Edg}{{Edg}}
\newtheorem{prop}{Proposition}[section]
\newtheorem{theo}[prop]{Theorem}
\newtheorem{lemm}[prop]{Lemma}
\newtheorem{coro}[prop]{Corollary}
\newtheorem{defi-prop}[prop]{Definition-Proposition}
\def\ev{\text{ev}}
\def\sO{{\mathscr O}}
\def\beq{\begin{equation}}
\def\eeq{\end{equation}}
\def\bM{\mathbf{M}}
\def\bD{{\mathbf D}}
\def\bee{\begin{equation}}
\def\eeq{\end{equation}}
\def\a{\mathbf a}
\def\e{\mbox{e}}
\date{}
\date{}
\title[Genus one stable quasimap invariants]{Genus one stable quasimap invariants for projective complete intersections}
\author[M.-L.~Li]{Mu-lin Li}
\address{College of Mathematics and Econometrics, Hunan University, China} \email{mulin@hnu.edu.cn}
\begin{document}
\begin{abstract}
By using the infinitesimally marking point to break the loop in the localization calculation as Kim and Lho, and Zinger's explicit formulas for double $J$-functions, we obtain a formula for genus one stable quasimaps invariants when the target is a complete intersection Calabi-Yau in projective space, which gives a new proof of Kim and Lho's mirror theorem for elliptic quasimap invariants.
\end{abstract}

\maketitle

\section{Introduction}
The moduli space of stable quotients was first constructed and studied by Marian, Oprea and Pandharipande \cite{MOP}. Cooper and Zinger \cite{CZ} calculated the $J$-function of stable quotients for projective complete intersections, and proved that it is related to the genus zero Gromov-Witten invariants by mirror map. Using the moduli space of stable quasimaps, which is a generalization of stable quotient by Ciocan-Fontanine, Kim and Maulik~\cite{FKM}, Ciocan-Fontanine, Kim proved that the genus zero stable quasimap invariants (including twisted cases) are related to stable map by mirror map in \cite{FK}. Later Kim and Lho \cite{KB} obtained the formula for genus one stable quasimap invariants without markings for projective complete intersections. Combining with the result in \cite{FK2} it recovered the results of Popa \cite{Popa} and Zinger \cite{Zin1} on genus one Gromov Witten invariants of projective complete intersections by mirror map. In this paper using the double $J$-functions of complete intersections in projective space proved by Zinger \cite{Zin2}, we give another proof of that formula.

Let $\ell$ be a nonnegative integer. $E:=\oplus_{i=1}^{\ell}L_i$ be the direct sum of line bundles over $\PP^{n-1}$ with degree $a_i=\deg L_i>0$.

If $|\a|:=\sum_{k=1}^{\ell}a_k=n$, a transversal section $s$ of $E$ gives a Calabi-Yau manifold $X$, which is a complete intersection in $\PP^{n-1}$.

Let $\overline{Q}_{1,0}(X,d)$ be the moduli space of genus one and degree $d$ stable quasimaps to $X$. It is a proper Deligne-Mumford stack and has perfect obstruction theory. Thus it has virtual cycle $[\overline{Q}_{1,0}(X,d)]^{\text{vir}}$ with zero virtual dimension. Let
$$
G_{1,0}:=\sum_{d=1}^{\infty}q^d\deg[\overline{Q}_{1,0}(X,d)]^{\text{vir}}.
$$

Let $\overline{Q}_{1,0}(\PP^{n-1},d)$ be the moduli space of genus one and degree $d$ stable quasimaps to $\PP^{n-1}$. It is a smooth Deligne-Mumford stack. Let $$\tilde\pi:\tilde\cC\to \overline{Q}_{1,0}(\PP^{n-1},d) $$ be the universal family, $\tilde\cS$ be the universal bundle over $\tilde\cC$.
Let $\tilde\iota$ be the closed immersion $\tilde\iota: \overline{Q}_{1,0}(X,d)\to \overline{Q}_{1,0}(\PP^{n-1},d)$. By the functoriality in \cite{KKP} we have
$$
\tilde\iota_{*}[\overline{Q}_{1,0}(X,d)]^{\text{vir}}=\mathbf{e}(\tilde\cV_1)\cap[\overline{Q}_{1,0}(\PP^{n-1},d)],
$$
where $\tilde\cV_1:=\oplus_{i=1}^{\ell} \tilde{\pi}_{*}\tilde\cS^{\vee\otimes a_i}$ is locally free. Thus
\beq\label{Cal}
\deg[\overline{Q}_{1,0}(X,d)]^{\text{vir}}=\deg(\mathbf{e}(\tilde\cV_1)\cap[\overline{Q}_{1,0}(\PP^{n-1},d)]).
\eeq

The standard torus $\TT=(\CC^*)^n$ action on $\PP^{n-1}$ induces an action on the moduli space $\overline{Q}_{1,0}(\PP^{n-1},d)$. There are two different types of the fixed loci when applying localization method to calculate the degree on the right hand side of (\ref{Cal}). One is the loop type and the other is the vertex type. To calculate the degree of the loop type, we use an infinitesimally marking to break the loop as Kim and Lho \cite{KB}, see Section 2.2. But in this paper we work out the loop contribution by the double $J$-functions given by Zinger \cite{Zin2}, which are directly related to the hypergeometric series used in calculation.

In the forthcoming papers, we will extend the calculation to the genus one stable quasimap invariants with markings, and genus two stable quasimap invariants for projective complete intersections.

Let $\cA_i(q)$ be as in Proposition \ref{equivred_prp_A}, $\cF^{(0,0)}(\alpha_i,q)$ and $\dot\Phi^{(0)}(\alpha_i,q)$ be as in Corollary \ref{expan}. By the localization method, we prove the following result.
\begin{theo}\label{main_2}For projective complete intersection Calabi-Yau $X$,
\beq\nonumber
q\frac{d}{d q}G_{1,0}=\bigg\{\sum_{i\in\mathbf{n}}\bigg(\cA_{i}(q)+\frac{1}{24}q\frac{d}{d q}\bigg(c_{i}(\alpha)\cF^{(0,0)}(\alpha_i,q)-\log \dot\Phi^{(0)}(\alpha_i,q)\bigg)\bigg)\bigg\}\bigg|_{\alpha=0},
\eeq
where $c_{i}(\alpha)=\sum_{k\neq i}\frac{1}{\alpha_k-\alpha_i}+\sum_{k=1}^{\ell}\frac{1}{a_k\alpha_i}$.
\end{theo}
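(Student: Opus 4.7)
\medskip

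\noindent\textbf{Proof proposal.}  The plan is to apply virtual torus localization to the right-hand side of \eqref{Cal} with respect to the standard $\TT=(\CC^*)^n$-action on $\overline{Q}_{1,0}(\PP^{n-1},d)$, and then to re-package the two families of fixed-locus contributions (vertex type and loop type) into the two types of terms appearing in the statement. After $\TT$-localization the fixed loci of $\overline{Q}_{1,0}(\PP^{n-1},d)$ are indexed by decorated graphs in the usual way; we divide them into \emph{vertex type} loci, for which the underlying genus-one curve is contracted to a single $\TT$-fixed point $p_i\in\PP^{n-1}$, and \emph{loop type} loci, for which the dual graph contains a circuit whose edges map with positive degree to $\TT$-invariant lines. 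Summing the vertex-type contributions (with the equivariant Euler class of $\tilde\cV_1$ inserted and with the inverse equivariant Euler class of the virtual normal bundle) will, by Proposition \ref{equivred_prp_A}, collapse to $\sum_{i\in\mathbf{n}}\cA_i(q)$ evaluated at $\alpha=0$.

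The main work is the loop-type contribution. Following Kim--Lho, I would first add a single infinitesimally weighted marking on the circle of the dual graph; this breaks the loop into a chain and converts the loop sum into an integral over a product of two genus-zero quasimap moduli spaces attached along the new marking, divided by an automorphism factor. The factor $\tfrac{1}{24}$ will arise from the Chern class computation on the $\psi$-class at the marking combined with the contribution of the genus-one component (via the standard ``$\lambda_1=\psi/24$'' type identity after integration against the Hodge class, equivalently the $\int_{\overline{M}_{1,1}}\psi_1=\tfrac{1}{24}$ relation). The derivative $q\,d/dq$ in the statement comes from translating the node-smoothing/bubble-attaching sum into a logarithmic derivative of a generating series, with the insertion of $c_i(\alpha)$ being the equivariant Euler contribution at the fixed point $p_i$ of the tangent directions along the $\ell$ hypersurface constraints and the $(n-1)$ coordinate directions.

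The next step is to identify the two genus-zero generating series that appear at the broken marking with Zinger's double $J$-functions. Concretely, the chain of quasimaps meeting at the marking produces, after summing over leg configurations and degrees, a bilinear pairing (at the fixed point $p_i$) of the form $\cF^{(0,0)}(\alpha_i,q)$, while the ``weighted marking'' localization residue pulls out the logarithm $\log\dot\Phi^{(0)}(\alpha_i,q)$; both identifications are read off from Corollary \ref{expan}. Putting these together inside the $q\,d/dq$ gives exactly the bracketed loop-type term in the theorem. The identity is then obtained by summing over $i\in\mathbf{n}$ and taking the non-equivariant limit $\alpha\to 0$; the non-equivariant limit is well-defined because the equivariant corrections between vertex and loop types cancel exactly in the combination $\cA_i+\tfrac{1}{24}q\,d/dq(c_i\cF^{(0,0)}-\log\dot\Phi^{(0)})$.

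The hardest step will be the explicit matching between the loop contribution, after the infinitesimal marking trick, and the concrete hypergeometric expressions $\cF^{(0,0)}$ and $\dot\Phi^{(0)}$ coming from Zinger's double $J$-functions: one must show that the sum of edge contributions along the broken chain (with all possible distributions of degree and with the correct smoothing factors at nodes) assembles into precisely those two series and not into some unrecognizable combination. This is where Zinger's explicit formulas for the double $J$-functions replace the more laborious graph-sum analysis in \cite{KB}, and it is the place where the non-equivariant limit $\alpha=0$ must be taken with care to cancel poles between the vertex and loop sums.
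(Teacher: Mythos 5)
There is a genuine gap: you have attached the two families of fixed loci to the wrong terms of the formula. In the paper's localization on $\overline{Q}_{1,0|1}(\PP^{n-1},d)$, the series $\cA_i(q)$ of Proposition \ref{equivred_prp_A} is the total contribution of the \emph{loop-type} ($A_i$) graphs: it is precisely there that one breaks the loop at the infinitesimally weighted marking $\eta(1)=v_0$, sums over the two half-edges, and recognizes the resulting chain sum as a double residue of Zinger's two-point series $\widetilde{\cZ}_{ii}^*(\hb_1,\hb_2,q)$. The term $\frac{1}{24}q\frac{d}{dq}\big(c_i(\alpha)\cF^{(0,0)}(\alpha_i,q)-\log\dot\Phi^{(0)}(\alpha_i,q)\big)$ is instead the \emph{vertex-type} ($B_i$) contribution of Proposition \ref{equivred_prp_B}, coming from a contracted genus-one component over $p_i$: the factor $\frac{1}{24}$ arises from the Hodge class $\mathbf{e}(\EE)$ via the reduction $\int_{\overline{\cM}_{1,0|d}}\mathbf{e}(\EE)F^{(1,0)}_d=\frac{1}{24}\int_{\overline{\cM}_{0,2|d}}F^{(0,2)}_d$ together with Kim--Lho's logarithm identity, $c_i(\alpha)$ is the ratio of Euler classes $\mathbf{e}(\EE^\vee\otimes T_{p_i}\PP^{n-1})\,\mathbf{e}(E|_{p_i})/\big(\mathbf{e}(T_{p_i}\PP^{n-1})\,\mathbf{e}(\EE^\vee\otimes E|_{p_i})\big)$ at the fixed point, and $\log\dot\Phi^{(0)}$ enters through Zinger's identification of the three-pointed genus-zero sum with $1/\dot\Phi^{(0)}$. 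Your claim that ``summing the vertex-type contributions \ldots will, by Proposition \ref{equivred_prp_A}, collapse to $\sum_i\cA_i(q)$'' cannot work as stated: the vertex loci carry Hodge classes on $\overline{\cM}_{1,(0,1)|d}$ and never see the double $J$-function residues, while the loop loci contain no genus-one component at all, so no ``$\lambda_1$/$\psi$ over $24$'' identity is available there.

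A second, smaller structural issue: in the paper the infinitesimally weighted marking is inserted at the level of the moduli space from the outset, and the insertion $\hat{ev}_1^*\tilde\gamma$ is what turns the left-hand side into $q\frac{d}{dq}G_{1,0}$ and simultaneously produces the factor $\big(q\frac{d}{dq}\cF^{(0,0)}(\alpha_i,q)+\alpha_i\big)$ inside $\cA_i$ and the overall $q\frac{d}{dq}$ in the vertex term. If, as you propose, you localize the unmarked space $\overline{Q}_{1,0}(\PP^{n-1},d)$ from \eqref{Cal} and introduce the marking only inside the loop analysis, the left-hand side remains $G_{1,0}$ and the stated identity (which computes $q\frac{d}{dq}G_{1,0}$ with both types of terms carrying the marking insertion) does not come out; you would be reproving Kim--Lho's original setup rather than the theorem as stated. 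To repair the proposal, exchange the roles of the two graph types, carry the weighted marking on both, and derive the vertex term by the Kim--Lho Hodge-integral argument plus Zinger's Proposition 4.1, reserving the double $J$-function machinery for the loop term.
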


Let $L(q)=(1-\a^{\a}q)^{-\frac{1}{n}}$ and $\mu(q)=\int_{0}^q\frac{L(u)-1}{u}du$, where $\a^{\a}=\prod_{k=1}^{\ell} a_k^{a_k}$. For $p\in\ZZ^{\ge0}$, let
$\dot{I}_p(q)\in\QQ[[q]]$ be defined as in Section 3. We can recover the following result of Kim and Lho from Theorem \ref{main_2}.
\begin{theo}\cite[Theorem 1.1]{KB}
For projective complete intersection Calabi-Yau $X$,

$$
G_{1,0}=\frac{1}{2}\AA(q)+\frac{1}{24}\bigg(\sum_{i=1}^{\ell}(\frac{n}{a_i}-\binom{n}{2})\mu(q)-\frac{n(\ell+1)}{2}\log L(q)\bigg),
$$
where
\begin{eqnarray*}
\AA(q)=\frac{n}{24}(n-1-2\sum_{r=1}^{\ell}\frac{1}{a_r})\mu(q)-\frac{3(n-1-\ell)^2+(n-2)}{24}\log(1-\a^{\a}q)\\
-\sum_{p=0}^{n-2-\ell}\binom{n-p-\ell}{2}\log \dot I_p(q).
\end{eqnarray*}
\end{theo}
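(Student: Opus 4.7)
The plan is to derive Kim and Lho's closed formula for $G_{1,0}$ from the differential identity in Theorem~\ref{main_2} by computing the $\alpha\to 0$ limit of every summand in closed form in terms of $L(q)$, $\mu(q)$, $\dot I_p(q)$ and then integrating in $q$. Because $G_{1,0}$ has no degree-zero contribution and $L(0)=1$, $\mu(0)=0$, $\dot I_p(0)=1$, the constants of integration vanish, so matching $q\tfrac{d}{dq}$ is enough.

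First I would unpack the explicit formulas for $\cA_i(q)$, $\cF^{(0,0)}(\alpha_i,q)$ and $\dot\Phi^{(0)}(\alpha_i,q)$ from Proposition~\ref{equivred_prp_A} and Corollary~\ref{expan}. Each is a power series in $q$ whose coefficients are Laurent polynomials in the equivariant parameters $\alpha=(\alpha_1,\dots,\alpha_n)$. The key input from Zinger's double $J$-functions is that in the non-equivariant limit $\cF^{(0,0)}(\alpha_i,q)$ reduces to $\alpha_i\mu(q)$ plus higher-order corrections in $\alpha_i$, while $\dot\Phi^{(0)}(\alpha_i,q)$ factors as a product of powers of the $\dot I_p(q)$'s with an $L(q)$ correction. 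Since $c_i(\alpha)$ has a simple pole at $\alpha=0$ and $\log\dot\Phi^{(0)}$ is also singular in $\alpha_i$, the individual summands of Theorem~\ref{main_2} diverge, but their sum over $i\in\mathbf{n}$ is finite.

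Second I would realize the cancellation by interpreting $\sum_{i\in\mathbf{n}}(\,\cdot\,)|_{\alpha=0}$ as the Bott residue of a non-equivariant class on $\PP^{n-1}$; this automatically kills the pieces polynomial in the $\alpha_k$'s and produces the combinatorial constants $\binom{n}{2}$, $\binom{n-p-\ell}{2}$, $(n-1-\ell)^2$, $(n-2)$ and $\tfrac{n(\ell+1)}{2}$ that appear in Kim and Lho's formula. The same picture explains the index shift from the $n$ fixed points to the $\ell$ line bundles: the $\ell$-indexed piece $\sum_k\tfrac{1}{a_k\alpha_i}$ inside $c_i$ survives the non-equivariant limit while the index $i\in\mathbf{n}$ is absorbed into an overall factor. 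Collecting pieces, I expect $\int\bigl(\sum_i\cA_i(q)\bigr)|_{\alpha=0}\,\tfrac{dq}{q}=\tfrac12\AA(q)$ and $\tfrac{1}{24}\sum_i\bigl(c_i(\alpha)\cF^{(0,0)}-\log\dot\Phi^{(0)}\bigr)|_{\alpha=0}$ to yield the bracketed $\mu$-and-$\log L$ term of Kim and Lho.

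The main obstacle will be the bookkeeping of the $\alpha$-expansions: one needs $\cF^{(0,0)}$ and $\dot\Phi^{(0)}$ to order two or three in $\alpha_i$ for the cancellation to be exact, and the resulting finite parts must be recognized as closed combinations of $L(q)$, $\mu(q)$ and $\dot I_p(q)$. The substantive content of this recognition is a family of hypergeometric identities among the $\dot I_p(q)$'s, implicit in Zinger's double $J$-function formula, by which the Bott-residue output is rewritten in the explicit form $\sum_p\binom{n-p-\ell}{2}\log\dot I_p(q)$ together with the constant multiples of $\mu(q)$ and $\log(1-\a^\a q)$ appearing in $\AA(q)$.
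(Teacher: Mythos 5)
Your overall route is the paper's: start from Theorem \ref{main_2}, evaluate the two families of summands at $\alpha=0$ in terms of $L(q)$, $\mu(q)$, $\dot I_p(q)$, and integrate in $q$ using $L(0)=1$, $\mu(0)=0$, $\dot I_p(0)=1$. Your treatment of the vertex part is essentially what the paper does: with $\cF_{n}^{(0,0)}(\alpha_i,q)=\xi_{n}(\alpha_i,q)$ and the structure (\ref{xi-for}), the sums $\sum_i\sum_{k\neq i}\xi_{n}(\alpha_i,q)/(\alpha_k-\alpha_i)$ and $\sum_i\xi_{n}(\alpha_i,q)/(a_k\alpha_i)$ are computed by the residue theorem on $S^2$ (your ``Bott residue'' step), and the only fact needed about $\dot\Phi^{(0)}$ is the limit $\dot\Phi^{(0)}(\alpha_i,q)|_{\alpha=0}=L(q)^{(\ell+1)/2}$ from Zinger's (4-10); no $\dot I_p$-product expansion of $\dot\Phi^{(0)}$ and no higher-order $\alpha$-expansion of $\cF^{(0,0)}$ beyond (\ref{xi-for}) is required.

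The genuine gap is at the loop contribution, which is exactly where all the nontrivial constants of $\AA(q)$ originate. You only assert that $\int\bigl(\sum_i\cA_i(q)\bigr)|_{\alpha=0}\,dq/q$ should equal $\tfrac12\AA(q)$, attributing the needed identities to being ``implicit in Zinger's double $J$-function formula''; this provides no mechanism for producing the coefficients $\tfrac{n}{24}(n-1-2\sum_r 1/a_r)$, $\tfrac{3(n-1-\ell)^2+(n-2)}{24}$ and $\binom{n-p-\ell}{2}$. The paper needs two concrete inputs that your plan omits: first, Lemma \ref{good}, which combines Zinger's Theorem \ref{main1} for $\dot{\cZ}(\hbar_1,\hbar_2,q)$ with the vanishing Lemmas \ref{good0} and \ref{good'0} to show that, after summing over $i$ and setting $\alpha=0$, the two-point kernel $\widetilde{\cZ}_{ii}^*(\hb_1,\hb_2,q)$ may be replaced by the explicit hypergeometric kernel $\alpha_i^{n-1}\FF(\alpha_i/\hbar_1,\alpha_i/\hbar_2,q)$; second, the closed-form double-residue evaluation $\fR_{\hb_1=0}\fR_{\hb_2=0}\bigl\{\e^{-\mu(q)\alpha_i(\hbar_1^{-1}+\hbar_2^{-1})}\FF(\alpha_i/\hbar_1,\alpha_i/\hbar_2,q)/(\hbar_1\hbar_2(\hbar_1+\hbar_2))\bigr\}=\alpha_i^{-1}L(q)^{-1}q\frac{d}{dq}\AA(q)$, which is not a consequence of Zinger's theorem but is Popa's Lemma 5.4, quoted as Lemma \ref{good2}. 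Without these two steps (or an equivalent computation of the double residue of $\FF$), the argument does not reach the stated formula; once they are in place, the remaining bookkeeping ($q\frac{d}{dq}\cF^{(0,0)}(\alpha_i,q)+\alpha_i\to\alpha_iL(q)$, cancellation against $\alpha_i^{-1}L(q)^{-1}$, and $\sum_i\alpha_i^{n-1}/\prod_{k\neq i}(\alpha_i-\alpha_k)=1$) gives (\ref{A-va}) and, with (\ref{B-va}), the theorem.
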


{\bf Acknowledgement}: The author thanks Ionut Ciocan-Fontanine, Bumsig Kim and Hyenho Lho for useful discussion when the author visited KIAS. The author also thanks Huai-Liang Chang, Jun Li and Wanmin Liu for helpful discussions. This work was supported by Start-up Fund of Hunan University.

\section{Localization}

$\PP^{n-1}$ has a torus action induced by the standard torus $\TT=(\CC^*)^n$ action on $\CC^n$. The $\TT$ fixed points of $\PP^{n-1}$ are the 1-planes spanned by $e_i$, where $\{e_i\}_{i=1}^n$ is the standard basis of $\CC^n$. We label it by $p_{i}$. Let $\mathbf{n}$ be the set $1\le i\le n$.

Denote by $H^*(B\TT,\QQ):=\QQ[\alpha_1,\cdots,\alpha_n]$, where $\alpha_i\!=\!\pi_i^*c_1(\gamma)$. Here
$$\pi_i\!: (\PP^{\infty})^n\lra\PP^{\infty} \qquad\hbox{and}\qquad
\gamma^{\vee}\lra\PP^{\infty}$$
are the projection onto the $i$-th component and the tautological line bundle respectively. Denote by $\alpha=(\alpha_1,\cdots,\alpha_n)$. Let $\QQ_{\alpha}$ be the fractional field of $\QQ[\alpha_1,\cdots,\alpha_n]$.

Let $\mathbf{x}$ be the equivariant Chern class of the dual universal bundle. Then
$$
H^*_{\TT}(\PP^{n-1})\cong \QQ[\alpha_1,\cdots,\alpha_n][\mathbf{x}]/<\prod_{i=1}^n(\mathbf{x}-\alpha_i)>.
$$

 Let $$
\phi_{i}=\prod_{k\neq i}(\mathbf{x}-\alpha_k).
$$
Then $\phi_{i}$ is the equivariant Poincar$\acute{e}$ dual of the points $p_{i}\in H_{\TT}^*(\PP^{n-1})$. The Euler class

$$\mathbf{e}(T_{\PP^{n-1}})|_{p_{i}}=\prod_{k\neq i}(\alpha_i-\alpha_k)=\phi_{i}|_{p_{i}}$$

The Artiyah-Bott localization theorem states that
\beq
\int_{\PP^{n-1}}\eta=\sum\int_{p_{i}}\frac{\eta|_{p_{i}}}{\mathbf{e}(N_{p_{i}/\PP^{n-1}})},\quad\text{for all}\,\eta\in H^*_{\TT}(\PP^{n-1}).
\eeq
Therefore
\beq\label{AB}
\eta|_{p_{i}}=\int_{\PP^{n-1}}\eta \phi_{i}.
\eeq

So for $\eta\in H^*_{\TT}(\PP^{n-1})$
\beq\label{zero}
\eta|_{p_{i}}=0 \quad\text{for all}\quad i\in\mathbf{n}\quad\Longleftrightarrow\quad \eta=0.
\eeq

Denote by
$$\overline{Q}_{g,k|m}(\PP^{n-1},d)$$
the moduli space of genus $g$ degree $d$ quasimaps to $\PP^{n-1}$ with ordinary $k$ pointed markings and infinitesimally weighted $m$ pionted markings (see \cite[Section 2]{FK1}). When $g=1$ and $k=0,\,m=1$, the genus one moduli space $\overline{Q}_{1,0|1}(\PP^{n-1},d)$ is a smooth Deligne-Mumford stack since the obstruction vanishes. Let $$\pi:\cC\to \overline{Q}_{1,0|1}(\PP^{n-1},d) $$ be the universal family, $\cS$ be the universal bundle over $\cC$. Let $\cS^{\vee}$ be the dual bundle of $\cS$.

Let $\iota$ be the closed immersion $\iota: \overline{Q}_{1,0|1}(X,d)\to \overline{Q}_{1,0|1}(\PP^{n-1},d)$, by the functoriality in \cite{KKP} we have
$$
\iota_{*}[\overline{Q}_{1,0|1}(X,d)]^{\text{vir}}=\mathbf{e}(\cV_1)\cap[\overline{Q}_{1,0|1}(\PP^{n-1},d)],
$$
where $\cV_1:=\oplus_{i=1}^{\ell} \pi_{*}\cS^{\vee\otimes a_i}$ is locally free.

\subsection{Localization}

First we recall some facts about residues, which are from \cite[Section 1.2]{Zin1}

If $f$ is a rational function in $z$ and possibly other variables
and $z_0\!\in\!S^2$, let $\fR_{z=z_0}f(z)$ denote the
residue of the one-form $f(z)dz$ at $z\!=\!z_0$; thus,
$$\fR_{z=\infty}f(z)\equiv-\fR_{w=0}\big\{w^{-2}f(w^{-1})\big\}.$$
If $f$ involves variables other than $z$, $\fR_{z=z_0}f(z)$
is a function of the other variables. If $f$ is a power series in
$q$ with coefficients that are rational functions in~$z$ and
possibly other variables, let $\fR_{z=z_0}f(z)$ denote the
power series in~$q$ obtained by replacing each of the coefficients
by its residue at $z\!=\!z_0$. If $z_1,\ldots,z_k$ is a
collection of points in~$S^2$, not necessarily distinct, we define
$$\fR_{z=z_1,\ldots,z_k}f(z) \equiv
\sum_{y\in\{z_1,\ldots,z_k\}}\!\!\!\!\!\! \fR_{z=y}f(z).$$

%

Let $\hat\pi:\hat\cC\to\overline{Q}_{0,2}(\PP^{n-1},d)$ be the universal family, $\hat\cS$ be the universal bundle over $\hat\cC$.
Let
\beq\label{prod_e0}
\cV_{n;\a}^{(d)}:=\bigoplus_{k}R^0\hat\pi_*\big(\hat\cS^{\vee\otimes a_k}\big)
 \lra \overline{Q}_{0,2}(\PP^{n-1},d),\eeq
\beq\label{prod_e}
\dot\cV_{n;\a}^{(d)}:=\bigoplus_{k}R^0\hat\pi_*\big(\hat\cS^{\vee\otimes a_k}(-\si_1)\big)
 \lra \overline{Q}_{0,2}(\PP^{n-1},d)\eeq
 and
 \beq\ddot\cV_{n;\a}^{(d)}:=\bigoplus_{k}R^0\hat\pi_*\big(\hat\cS^{\vee\otimes a_k}(-\si_2)\big)
 \lra \overline{Q}_{0,2}(\PP^{n-1},d).
 \eeq
where $\sigma_i$ is the section correspondent to the marking.
Thus $\cV_{n;\a}^{(d)}$, $\dot\cV_{n;\a}^{(d)}$ and $\ddot\cV_{n;\a}^{(d)}$ are $\TT$-equivariant vector bundles.
Let
\beq\label{cZ2}
\dot{\cZ}(\mathbf{x},\hbar,q) :=
1+\sum_{d=1}^{\infty}q^d \ev_{1*}\!\!\left[\frac{\mathbf{e}(\dot\cV_{n;\a}^{(d)})}{\hbar\!-\!\psi_1}\right]
\in H_{\TT}^*(\PP^{n-1})[\hbar^{-1}]\big[\big[q\big]\big],\eeq

\beq\label{ZZdfn_e2}
\ddot{\cZ}(\mathbf{x},\hbar,q) :=
1+\sum_{d=1}^{\infty}q^d \ev_{1*}\!\!\left[\frac{\mathbf{e}(\ddot\cV_{n;\a}^{(d)})}{\hbar\!-\!\psi_1}\right]
\in H_{\TT}^*(\PP^{n-1})[\hbar^{-1}]\big[\big[q\big]\big].\eeq
 For $s\in\ZZ^{\ge0}$, Let

 \beq
\dot{\cZ}^{(s)}(\mathbf{x},\hbar,q) :=
\mathbf{x}^s+\sum_{d=1}^{\infty}q^d \ev_{1*}\!\!\left[\frac{\mathbf{e}(\dot\cV_{n;\a}^{(d)})\ev_2^*\mathbf{x}^s}{\hbar\!-\!\psi_1}\right]
\in H_{\TT}^*(\PP^{n-1})[\hbar^{-1}]\big[\big[q\big]\big],\eeq

\beq
\ddot{\cZ}^{(s)}(\mathbf{x},\hbar,q) :=
\mathbf{x}^s+\sum_{d=1}^{\infty}q^d \ev_{1*}\!\!\left[\frac{\mathbf{e}(\ddot\cV_{n;\a}^{(d)})\ev_2^*\mathbf{x}^s}{\hbar\!-\!\psi_1}\right]
\in H_{\TT}^*(\PP^{n-1})[\hbar^{-1}]\big[\big[q\big]\big].\eeq

 Let
\beq\label{ZZdfn2}
\dot{\cZ}(\hbar_1,\hbar_2,q) :=
\frac{[\overline{\Delta}]}{\hbar_1+\hbar_2}+\sum_{d=1}^{\infty}q^d\big\{\ev_1\!\times\!\ev_2\}_*\!\!\left[\frac{\mathbf{e}(\dot\cV_{n;\a}^{(d)})}
{(\hbar_1\!-\!\psi_1)(\hbar_2\!-\!\psi_2)}\right]
\eeq
$$
\in H_{\TT}^*(\PP^{n-1})[\hbar_1^{-1},\hbar_2^{-1}]\big[\big[q\big]\big],$$
where $[\overline{\Delta}]$ is the equivariant Poincar\'{e} dual of the diagonal class.
Denote by
\begin{eqnarray}\label{Z2ptdfn_e} &&\widetilde{\cZ}_{ij}^*(\hb_1,\hb_2,q) \\
&&=\frac{1}{2\hb_1\hb_2}(\dot{\cZ}(\hbar_1,\hbar_2,q)-\frac{[\overline{\Delta}]}{\hbar_1+\hbar_2})|_{p_{i}\times p_{j}}\nonumber\\
&&=\frac{1}{2\hb_1\hb_2}\sum_{d=1}^{\infty}q^d
\int_{\overline{Q}_{0,2}(\PP^{n-1},d)}
\frac{\mathbf{e}(\dot\cV_{n;\a}^{(d)})}{(\hb_1\!-\!\psi_1)(\hb_2\!-\!\psi_2)}
\ev_1^*\phi_{i}\ev_2^*\phi_{j}\nonumber.
\end{eqnarray}

Let
\begin{eqnarray*}
\cY(\mathbf{x},\hbar,q)=\sum_{d\ge0}q^d \cY_d(\hbar),\\
\dot{\cY}(\mathbf{x},\hbar,q)=\sum_{d\ge0}q^d \dot{\cY}_d(\hbar),\\
\ddot{\cY}(\mathbf{x},\hbar,q)=\sum_{d\ge0}q^d \ddot{\cY}_d(\hbar),
\end{eqnarray*}
where
\begin{eqnarray*}
\cY_d(\hbar)=\sum_{d}\frac{\prod_{k=1}^{\ell}\prod_{l=1}^{a_kd}(a_k\mathbf{x}+l\hbar)}{\prod_{l=1}^{d}
\prod_{k=1}^{n}(\mathbf{x}-\alpha_k+l\hbar)},\\
\dot{\cY}_d(\hbar)=\sum_{d}\frac{\prod_{k=1}^{\ell}\prod_{l=1}^{a_kd}(a_k\mathbf{x}+l\hbar)}{\prod_{l=1}^{d}\bigg(
\prod_{k=1}^{n}(\mathbf{x}-\alpha_k+l\hbar)-\prod_{k=1}^{n}(\mathbf{x}-\alpha_k)\bigg)},\\
\ddot{\cY}_d(\hbar)=\sum_{d}\frac{\prod_{k=1}^{\ell}\prod_{l=0}^{a_kd-1}a_k(\mathbf{x}+l\hbar)}{\prod_{l=1}^{d}
\bigg(\prod_{k=1}^{n}(\mathbf{x}-\alpha_k+l\hbar)-\prod_{k=1}^{n}(\mathbf{x}-\alpha_k)\bigg)}.
\end{eqnarray*}
Denote $\dot{I}_0(q)=\dot{\cY}(\mathbf{x},\hbar,q)|_{\mathbf{x}=\alpha=0\atop\hbar=1}$ and $\ddot{I}_0(q)=\ddot{\cY}(\mathbf{x},\hbar,q)|_{\mathbf{x}=\alpha=0\atop\hbar=1}$.
\begin{theo}\cite[Theorem 3]{Zin2}\label{thm1'}
If $\ell\!\in\!\ZZ^{\ge0}$, $n\!\in\!\ZZ^+$, and $\a\!\in\!(\ZZ^{>0})^{\ell}$ are such that
$|\a|:=\sum_{i=1}^{\ell}a_i=n$, then
\beq \dot{\cZ}(\mathbf{x},\hbar,q)=\frac{\dot{\cY}(\mathbf{x},\hbar,q)}{\dot{I}_0(q)}
\in H_{\TT}^*(\PP^{n-1})\big[\big[\hbar^{-1},q\big]\big],\eeq
and
\beq \ddot{\cZ}(\mathbf{x},\hbar,q)=\frac{\ddot{\cY}(\mathbf{x},\hbar,q)}{\ddot{I}_0(q)}
\in H_{\TT}^*(\PP^{n-1})\big[\big[\hbar^{-1},q\big]\big].\eeq
\end{theo}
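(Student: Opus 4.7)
This is Theorem 3 of \cite{Zin2}; I would follow the standard Givental-style \emph{mirror principle characterization} (MPC) approach, which reduces the identity to a short list of structural properties both sides can be checked to satisfy. By (\ref{zero}) it suffices to prove $\dot{\cZ}(\alpha_i,\hbar,q)=\dot{\cY}(\alpha_i,\hbar,q)/\dot I_0(q)$ in $\QQ_\alpha\big[\big[\hbar^{-1},q\big]\big]$ for each $i\in\mathbf n$, and similarly for $\ddot{\cZ}$. The plan is to characterize the restriction $\dot{\cZ}|_{p_i}$ uniquely by: (a) polynomial/rational pole structure, i.e.\ the only poles in $\hbar$ at $q$-degree $d$ occur at $\hbar=(\alpha_j-\alpha_i)/d$ with $j\neq i$; (b) a self-recursion expressing the residue at each such pole as a universal prefactor times $\dot{\cZ}|_{p_j}$ evaluated at $\hbar=(\alpha_j-\alpha_i)/d$; and (c) a normalization fixing the $q$-constant term and the $\hbar^{-1}$-expansion.

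For the geometric side, I would apply $\TT$-localization to the integral defining $\dot{\cZ}|_{p_i}$. Fixed loci of $\overline{Q}_{0,2}(\PP^{n-1},d)$ are indexed by graphs whose vertices map constantly to some $p_j$ and whose edges correspond to rational components mapping with degree $l$ from $p_i$ to $p_j$ as $z\mapsto[z^l:\cdots]$. Splitting off the edge carrying the first marked point produces the announced pole $\hbar=(\alpha_j-\alpha_i)/d$ with residue given by the Euler classes of the normal bundle, the smoothing class at the node, and the restriction of $\mathbf{e}(\dot\cV^{(d)}_{n;\a})$ to that edge; the remaining sum reassembles into $\dot{\cZ}|_{p_j}$ at $\hbar=(\alpha_j-\alpha_i)/d$, giving the self-recursion (b). The twist $\hat\cS^{\vee\otimes a_k}(-\si_1)$ in (\ref{prod_e}) is exactly what suppresses the factor at $l=0$ on the edge, forcing the series to lie in $H^*_\TT(\PP^{n-1})\big[\big[\hbar^{-1},q\big]\big]$ without poles at $\hbar=\infty$; this is what distinguishes the role of $\dot\cZ$ from that of $\cZ$ and from $\ddot\cZ$.

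For the hypergeometric side, a direct inspection of $\dot\cY_d(\hbar)|_{\mathbf x=\alpha_i}$ shows its only poles in $\hbar$ lie at $\hbar=(\alpha_j-\alpha_i)/l$ for $1\le l\le d$, $j\neq i$. Factoring the product at such a pole splits $\dot\cY_d$ into a degree-$l$ hypergeometric factor times $\dot\cY_{d-l}(\hbar)|_{\mathbf x=\alpha_j}$ evaluated at $\hbar=(\alpha_j-\alpha_i)/l$, which matches the geometric recursion (b) once one divides by $\dot I_0(q)$. The normalization by $\dot I_0$ is precisely what aligns the $q$-constant terms and the $\hbar^{-1}$-expansions required by (c). The argument for $\ddot\cZ=\ddot\cY/\ddot I_0$ is entirely parallel, the twist $(-\si_2)$ producing the shift from $\prod_{l=1}^{a_kd}(a_k\mathbf{x}+l\hbar)$ to $\prod_{l=0}^{a_kd-1}a_k(\mathbf{x}+l\hbar)$.

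The hardest part, and the computational core of Zinger's proof, is the verification that the geometric residues at $\hbar=(\alpha_j-\alpha_i)/d$ agree with the hypergeometric ones on the nose. This reduces to matching, edge by edge, (i) $\prod_{l=1}^d\prod_{k=1}^n(\mathbf{x}-\alpha_k+l\hbar)$ with the Euler class of the normal bundle to the corresponding component in $\overline{Q}_{0,2}(\PP^{n-1},d)$, (ii) the numerator $\prod_k\prod_l(a_k\mathbf{x}+l\hbar)$ with the restriction of $\mathbf{e}(\dot\cV^{(d)}_{n;\a})$, and (iii) the effect of the twist $(-\si_1)$ on the $l=0$ factor. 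Once this bookkeeping is carried out, the MPC uniqueness statement, together with matching initial data, finishes the proof.
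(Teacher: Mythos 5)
First, a point of order: the paper does not prove this statement at all — it is imported verbatim as \cite[Theorem 3]{Zin2} and used as a black box — so there is no in-paper proof to measure yours against; the only meaningful comparison is with Zinger's own argument in the cited source, which is indeed the Givental-style scheme you outline (restriction to the fixed points via (\ref{zero}), a residue recursion, a polynomiality property, and a uniqueness lemma, plus the explicit hypergeometric solution). At that level your strategy is the right one.

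However, as written your characterization (a)--(c) is not sufficient, and (a) is in fact false. Restricting to $\mathbf{x}=\alpha_i$, the $k=i$ factors in the denominator of $\dot\cY_d$ contribute $\prod_{l=1}^{d}l\hbar=d!\,\hbar^{d}$ (the subtracted term $\prod_k(\mathbf{x}-\alpha_k)$ vanishes at $\mathbf{x}=\alpha_i$), so $\dot\cY_d(\hbar)\big|_{\mathbf{x}=\alpha_i}$ has a pole at $\hbar=0$ of order $d$; likewise the localization contributions to $\dot\cZ\big|_{p_i}$ in which the first marking sits on a vertex carry arbitrary powers of $\psi_1$, i.e.\ poles at $\hbar=0$ of unbounded order. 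Hence ``simple poles only at $\hbar=(\alpha_j-\alpha_i)/l$, plus recursion, plus normalization'' cannot pin the series down: the recursion determines the principal parts at the nonzero poles but says nothing about the part singular at $\hbar=0$. The ingredient that controls exactly this, and which your sketch omits, is the mutual polynomiality condition (this is what MPC abbreviates in \cite{Zin2}, not ``mirror principle characterization''): for a suitable pair of series one requires that $\sum_{i\in\mathbf{n}} e^{\alpha_i z}\,\dot\cZ|_{p_i}(\hbar,q)\,\cZ'|_{p_i}(-\hbar,q)\big/\prod_{k\neq i}(\alpha_i-\alpha_k)$ contain no negative powers of $\hbar$; geometrically this is established by an auxiliary $\CC^*$-equivariant localization on quasimap graph spaces, and only recursivity together with this polynomiality and matching low-degree data yields the uniqueness you invoke. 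Relatedly, your description of the fixed loci is the stable-maps one: in $\overline{Q}_{0,2}(\PP^{n-1},d)$ rational tails are forbidden by quasimap stability and the vertices instead carry base-point degrees, contributing factors $\overline{\cM}_{0,2|\mathfrak{d}(v)}$, and it is these vertex contributions — not only the edge factors and the twists $(-\si_1)$, $(-\si_2)$ — that distinguish the quasimap series from its Gromov--Witten counterpart. So the outline points in the right direction, but the uniqueness step and the quasimap-specific localization bookkeeping, which constitute the actual content of Zinger's proof, are missing.
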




\subsection{Insertion of $0+$ weighted marking}
Let $$\hat{ev}_1:\overline{Q}_{1,0|1}(\PP^{n-1},d)\to [\CC^{n}/\CC^*]$$ be the evaluations map at the infinitesimally marking, see \cite[Section 2.3]{FK1}. Let
$$\tilde\gamma\in H^2_{\TT}([\CC^{n}/\CC^*])$$
be the lift of $\gamma\in H^2_{\TT}(\PP^{n-1})$, where $\gamma$ is the hyperplane class .

Let
$$
<\tilde\gamma>_{1,0|1,d}:=\int_{\overline{Q}_{1,0|1}(\PP^{n-1},d)}\mathbf{e}(\cV_1)\hat{ev}^*_1\tilde\gamma,
$$
then
$$
\sum_{d=1}^{\infty}q^d<\tilde\gamma>_{1,0|1,d}=q\frac{d}{d q}G_{1,0}.
$$
In the rest of this section we use localization method to calculate the left hand side of the above equation. As described in \cite[Section~7]{MOP},
the fixed loci of the $\TT$-action on $\overline{Q}_{1,0|1}(\PP^{n-1},d)$
are indexed by connected decorated graphs. Such a graph can be described by set $(\Ver,\Edg)$ of vertices, A decorated graph is a tuple
\beq\Gamma = \big(\Ver,\Edg;\mu,\mathfrak{d},\eta\big),\eeq
where $(\Ver,\Edg)$ is a graph as above and
$$\mu\!:\Ver\lra \mathbf{n}, \qquad \mathfrak{d}\!: \Ver\!\sqcup\!\Edg\lra\ZZ^{\ge0}\qquad\mbox{and}\qquad\eta:[1]\lra \Ver$$
are maps such that
\beq
\mu(v_1)\neq\mu(v_2)  ~~~\hbox{if}~~ \{v_1,v_2\}\in\Edg, \qquad
\mathfrak{d}(e)\neq0~~\forall\,e\!\in\!\Edg.\eeq

\noindent
Let
$$|\Gamma|\equiv\sum_{v\in\Ver}\!\!\mathfrak{d}(v)+\sum_{e\in\Edg}\!\!\mathfrak{d}(e)$$
be the degree of~$\Gamma$. Denoted by
$$\mbox{val}(v)\equiv (\big|\{e\!\in\!\Edg\!: v\!\in\!e\}\big|
, \big|\{i\!\in\![1]\!: \eta(1)\!=\!v\}\big|) .$$
for the vertices $v\!\in\!\Ver$.

Let $\cZ_{\Gamma}$ be the fixed locus of $\overline{Q}_{1,0|1}(\PP^{n-1},d)$ corresponds to a
decorated graph $\Gamma$. Thus
$$\cZ_{\Gamma}\approx \prod_{v\in\Ver}\!\!\overline{\cM}_{g_v,\val(v)|\mathfrak{d}(v)}$$
up to a finite group qoutient, where  $\overline{\cM}_{g_v,(k,m)|\mathfrak{d}(v)}$ denotes the moduli space weighted pointed stable curves with $k$ ordinary markings and $m$ infinitesimally markings. When $m=0$, we abbreviated by $\overline{\cM}_{g_v,k|\mathfrak{d}(v)}$. Let $$\pi_1:\cC_{\overline{\cM}_{g_v,\val(v)|\mathfrak{d}(v)}}\to \overline{\cM}_{g_v,\val(v)|\mathfrak{d}(v)}$$ be the restriction of the universal family.

With $b_1,b_2,r\!\in\!\ZZ^{\ge0}$, let
\begin{eqnarray*}
&&\cF_{n}^{(b_1,b_2)}(\alpha_i,q)=\sum_{d=1}^{\infty}\frac{q^d}{d!}
\int_{\overline{\cM}_{0,2|d}}
\mathcal{G}_{n,d}^{(b_1,b_2)}(\alpha_i,q),
\end{eqnarray*}
where
\begin{eqnarray*}
\mathcal{G}_{n,d}^{(b_1,b_2)}(\alpha_i,q)&:=&\frac{\prod_{k\neq i}(\alpha_i-\alpha_k)\mathbf{e}(\dot\cV_{n;\a}^{(d)}(\alpha_i))\psi_1^{b_1}\psi_2^{b_2}}{\prod_{k\neq i} \mathbf{e}(R^0\pi_{1*}\cS^{\vee}(\alpha_i-\alpha_k))}.
\end{eqnarray*}

 By the proof of \cite[Theorem 4]{CZ}, we have
\beq\label{Fred_e}q\frac{d}{dq}\cF_{n}^{(b_1,b_2)}(\alpha_i,q)=\frac{1}{b_1!}\cF_{n}^{(0,0)}(\alpha_i,q)^{b_1}
\frac{1}{b_2!}\cF_{n}^{(0,0)}(\alpha_i,q)^{b_2}q\frac{d}{dq}\cF_{n}^{(0,0)}(\alpha_i,q).\eeq
By inductions on $b_2$, this gives
$$
\cF_{n}^{(0,b_2)}(\alpha_i,q)=\frac{1}{(b_2+1)!}\cF_{n}^{(0,0)}(\alpha_i,q)^{b_2+1}.
$$

Thus the $r=0$ case of \cite[Proposition 8.3]{CZ} is equivariant to
$$
\fR_{\hbar=0}\big\{\e^{-\frac{\cF_{n}^{(0,0)}(\alpha_i,q)}{\hbar}}\dot\cY(\alpha_i,\hbar,q)\big\}=0.
$$
Thus there is an expansion
\begin{coro}\cite[(4-9)]{Zin2}\label{expan}$$\e^{-\frac{\cF_{n}^{(0,0)}(\alpha_i,q)}{\hbar}}\dot\cY(\alpha_i,\hbar,q)=\sum_{m=0}^{\infty}\dot{\Phi}^{(m)}(\alpha_i,q)\hbar^m.$$
\end{coro}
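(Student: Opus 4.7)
The corollary asserts that the formal Laurent series in $\hbar$ given by $e^{-\cF_n^{(0,0)}(\alpha_i,q)/\hbar}\dot\cY(\alpha_i,\hbar,q)$, with coefficients in $\QQ_\alpha[[q]]$, contains no negative powers of $\hbar$, and so admits an expansion $\sum_{m\ge 0}\dot\Phi^{(m)}(\alpha_i,q)\hbar^m$. The residue identity just above the statement is only the vanishing of the coefficient of $\hbar^{-1}$; the content of the corollary is the analogous vanishing for every $\hbar^{-k}$ with $k\ge 1$.

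My plan is to deduce this from the full family of residue identities of which the displayed one is the $r=0$ case: for each $r\ge 0$, \cite[Prop.~8.3]{CZ} supplies a residue-vanishing statement in which the integrand carries an extra descendant insertion, and this insertion enters the localization computation precisely through the twisted invariants $\cF_n^{(b_1,b_2)}(\alpha_i,q)$. The essential algebraic input is the recursion $\cF_n^{(0,b_2)}=(b_2+1)!^{-1}\bigl(\cF_n^{(0,0)}\bigr)^{b_2+1}$ just established: combined with the Taylor expansion of $e^{-\cF_n^{(0,0)}/\hbar}$ in powers of $\hbar^{-1}$, it allows one to repackage each $r$-indexed residue identity as a linear relation on a single higher negative-power Laurent coefficient of $e^{-\cF/\hbar}\dot\cY$.

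I would therefore induct on $k$: using the $r=k-1$ identity together with the vanishing of the coefficients of $\hbar^{-1},\dots,\hbar^{-(k-1)}$ already established at earlier stages, force the vanishing of the coefficient of $\hbar^{-k}$. Setting $\dot\Phi^{(m)}(\alpha_i,q)$ to be the coefficient of $\hbar^m$ for $m\ge 0$ then yields the claimed expansion. The main obstacle will be the combinatorial bookkeeping in this inductive step: one must track how the exponential prefactor $e^{-\cF_n^{(0,0)}/\hbar}$, which via the recursion secretly encodes all of the $\cF_n^{(0,b)}$, meshes with the hierarchy of residue identities from \cite[Prop.~8.3]{CZ}. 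This reduction is precisely the argument carried out as \cite[(4-9)]{Zin2}, from which the corollary is quoted.
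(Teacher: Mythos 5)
You are right on one point, and it is worth saying: the displayed identity $\fR_{\hbar=0}\{\e^{-\cF_{n}^{(0,0)}(\alpha_i,q)/\hbar}\dot\cY(\alpha_i,\hbar,q)\}=0$ only expresses the vanishing of the coefficient of $\hbar^{-1}$, while the corollary asserts the vanishing of every $\hbar^{-k}$, $k\ge1$, so the paper's ``Thus there is an expansion'' is not a one-line implication. But your proposed repair has a genuine gap. The whole induction rests on an unstated reading of \cite[Proposition~8.3]{CZ} for $r\ge1$: you never say what those higher identities actually assert, nor do you carry out the step that is supposed to convert the $r=k-1$ identity, the recursion $\cF_{n}^{(0,b)}=\tfrac{1}{(b+1)!}(\cF_{n}^{(0,0)})^{b+1}$, and the previously established vanishings into the vanishing of the $\hbar^{-k}$ coefficient; the ``combinatorial bookkeeping'' you defer is precisely the proof. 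Your closing claim that this induction ``is precisely the argument carried out as \cite[(4-9)]{Zin2}'' is asserted, not verified, and it does not match how that expansion is obtained there: in \cite{Zin2} (building on \cite{Popa,Zin1}) the expansion is a property of the explicit hypergeometric series, established by power-series arguments for the exponent $\xi_{n}$ defined by the algebraic relation recalled in Section~3 of this paper, not by an induction over localization residue identities.

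What the paper actually does is quote \cite[(4-9)]{Zin2}; the computation preceding the corollary is only needed to pin down the exponent, and for that the single $r=0$ identity suffices. Indeed, once one knows that $\e^{-\xi_{n}(\alpha_i,q)/\hbar}\dot\cY(\alpha_i,\hbar,q)$ is a power series in $\hbar$ whose $\hbar^0$-coefficient has constant term $1$, write $\e^{-\cF_{n}^{(0,0)}/\hbar}\dot\cY=\e^{(\xi_{n}-\cF_{n}^{(0,0)})/\hbar}\sum_{m\ge0}\dot\Phi^{(m)}\hbar^m$ and look at the lowest power $q^{d_0}$ in which $\xi_{n}(\alpha_i,q)$ and $\cF_{n}^{(0,0)}(\alpha_i,q)$ could differ: modulo $q^{d_0+1}$ the $\hbar^{-1}$-coefficient equals that difference, so a nonzero difference would contradict the $r=0$ identity. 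Hence $\cF_{n}^{(0,0)}(\alpha_i,q)=\xi_{n}(\alpha_i,q)$ and the stated expansion follows. So the missing ingredient in your plan is not a tower of further residue identities but Zinger's regularity statement for the explicit exponent; if you insist on a purely geometric derivation instead, you must state the higher-$r$ cases of \cite[Proposition~8.3]{CZ} precisely and actually perform the induction, neither of which your proposal does.
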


In the case of moduli space $\overline{Q}_{1,0|1}(\PP^{n-1},d)$, they are two types of graph, either one distinguished vertex
 or one loop,
depending on whether the stable qusimaps they describe are constant
or not. The graphs with
one loop are called $A_{i}$-graphs.
In a graph of the $A_{i}$-type, the marked point~$1$ is attached to some vertex
$v_0\!\in\!\Ver$ that lies inside of the loop and is labeled~$i$.

Denote $\cA_{i}(q)$ by the total contribution from type $A_{i}$ graphs, then
\begin{prop}\label{equivred_prp_A}For every $i\in\mathbf{n}$,
\begin{eqnarray*}
\cA_{i}(q)&=&(\phi_{i}|_{p_{i}})^{-1}\bigg(q\frac{d}{dq}\cF_{n}^{(0,0)}(\alpha_i,q)+\alpha_i\bigg)\\
&&\fR_{\hb_1=0}\left\{\fR_{\hb_2=0}
\left\{\mbox{e}^{-\cF_{n}^{(0,0)}(\alpha_i,q)(\frac{1}{\hb_1}+\frac{1}{\hb_2})}\widetilde{\cZ}_{ii}^*(\hb_1,\hb_2,q)
 \right\}\right\}.
\end{eqnarray*}

\end{prop}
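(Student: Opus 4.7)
The plan is to enumerate all $A_i$-graphs $\Gamma$ contributing to the fixed locus sum for $\sum_{d=1}^{\infty}q^d\langle\ti\gamma\rangle_{1,0|1,d}$, and then reorganize their contributions by the Kim--Lho loop-breaking trick into the claimed closed form built from $\widetilde{\cZ}_{ii}^*$ and $\cF_{n}^{(0,0)}$. First, I would apply the Atiyah--Bott/virtual localization formula: on each $A_i$-graph $\Gamma$ with loop $L_\Gamma$ through the vertex $v_0$ carrying the infinitesimal marking labeled $i$, the integrand factors (up to the automorphism factor $|\Aut(\Gamma)|^{-1}$) as (i) a contribution at $v_0$ involving $\hat{ev}_1^*\tilde\gamma|_{p_i}=\alpha_i$ and the two half-edge smoothing factors $\frac{1}{\hb_1-\psi_1},\frac{1}{\hb_2-\psi_2}$; (ii) contributions from the other loop vertices and edges of $L_\Gamma$; and (iii) rooted-tree contributions from every maximal tree hanging off $L_\Gamma$. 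The prefactor $(\phi_i|_{p_i})^{-1}=\prod_{k\neq i}(\alpha_i-\alpha_k)^{-1}$ is the tangent Euler class of $\PP^{n-1}$ at $p_i$ that appears in the localization denominator at $v_0$.

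Next I would cut $L_\Gamma$ at $v_0$ into an open chain with both endpoints labeled $i$. Summing the contributions of all possible chain shapes (together with the genus-zero trees carrying the loop edges) of fixed weights $\hb_1,\hb_2$ at the two smoothing nodes at $v_0$ produces, by the reconstruction of two-point genus zero series from the bundles $\dot\cV_{n;\a}^{(d)}$, exactly the generating function $\widetilde{\cZ}_{ii}^*(\hb_1,\hb_2,q)$ defined in \eqref{Z2ptdfn_e}. The $\frac{1}{2\hb_1\hb_2}$ in \eqref{Z2ptdfn_e} encodes the $\ZZ/2$ symmetry of the loop under reflection through $v_0$ (absorbing the corresponding factor from $|\Aut(\Gamma)|$) and the two smoothing propagators at the endpoints; the diagonal subtraction $[\overline\Delta]/(\hb_1+\hb_2)$ removes the (unwanted) constant-edge zero-length chain from the enumeration.

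Finally, the rooted trees hanging off loop vertices other than $v_0$ are, by \eqref{Fred_e} and the $A_i$-graph analog of \cite[Section~7]{MOP}, the disjoint-attachment generating series of $\cF_{n}^{(0,0)}(\alpha_i,q)$; since each side of the broken loop can carry arbitrarily many such trees attached at any point, they exponentiate to $\mbox{e}^{-\cF_n^{(0,0)}(\alpha_i,q)/\hb_1}\mbox{e}^{-\cF_n^{(0,0)}(\alpha_i,q)/\hb_2}$, and the residues $\fR_{\hb_1=0}\fR_{\hb_2=0}$ implement the smoothing of the two nodes at $v_0$. The factor $q\frac{d}{dq}\cF_{n}^{(0,0)}(\alpha_i,q)+\alpha_i$ accounts for what happens at $v_0$ itself: the trees rooted at $v_0$ produce the $q\partial_q\cF_n^{(0,0)}$ term (via the derivative identity \eqref{Fred_e}, the infinitesimal marking providing the descendant insertion), while the summand $\alpha_i$ is the value of $\hat{ev}_1^*\tilde\gamma$ at $p_i$, which is nonzero precisely because $\ti\gamma\in H_\TT^2([\CC^n/\CC^*])$ is the stacky lift of $\gamma$. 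The main obstacle will be the combinatorial bookkeeping: correctly matching the loop automorphism factors (cyclic rotation plus reflection) to the $\frac{1}{2\hb_1\hb_2}$ normalization of $\widetilde{\cZ}_{ii}^*$, and correctly separating the chain enumeration (which builds $\widetilde{\cZ}_{ii}^*$) from the rooted-tree enumeration (which builds the exponential and the $q\partial_q\cF_n^{(0,0)}$ factor) without double-counting trees attached exactly at $v_0$.
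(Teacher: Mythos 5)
Your overall skeleton---break the loop at the vertex $v_0$ carrying the infinitesimal marking, identify the broken-chain sum with $\widetilde{\cZ}^*_{ii}$, and let the vertex factor supply $q\frac{d}{dq}\cF_{n}^{(0,0)}+\alpha_i$---is indeed the Kim--Lho strategy the paper follows, but two of your mechanisms do not exist in the quasimap setting and a key analytic step is missing. Quasimap ($0+$) stability forbids rational tails, so $A_i$-graphs are bare loops whose degree is carried by base points at the vertices: there are no ``maximal trees hanging off $L_\Gamma$,'' and the exponential $\mbox{e}^{-\cF_{n}^{(0,0)}(\alpha_i,q)(\hb_1^{-1}+\hb_2^{-1})}$ cannot arise from exponentiating tree attachments along the two sides of the broken loop. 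In the paper it comes from the vertex $v_0$ itself: expanding the two node-smoothing factors $(\hb_{\pm}-\pi_e^*\psi_{\pm})^{-1}$ produces powers $\hb_{+}^{b_1}\hb_{-}^{b_2}$ of the tangent-line classes on the weighted moduli $\overline{\cM}_{0,(2,1)|\mathfrak{d}(v_0)}$; the resulting integrals $\overline{\cF}^{(b_1,b_2)}$ factor, via the divisor relation $\psi_2=D_{1\hat{1},2}$ and the identity \eqref{Fred_e}, as $\overline{\cF}^{(0,0)}\,\frac{(\cF_{n}^{(0,0)})^{b_1}}{b_1!}\frac{(\cF_{n}^{(0,0)})^{b_2}}{b_2!}$, and the sum over $b_1,b_2$ assembles the exponential. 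Similarly, $\overline{\cF}^{(0,0)}=q\frac{d}{dq}\cF_{n}^{(0,0)}(\alpha_i,q)+\alpha_i$ is obtained from the canonical identification of $\overline{\cM}_{0,(2,1)|d}$ with the universal curve over $\overline{\cM}_{0,2|d}$ together with $\hat{ev}_1^*\tilde\gamma$, not from trees rooted at $v_0$.

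Second, localization does not directly give you $\widetilde{\cZ}^*_{ii}(\hb_1,\hb_2,q)$ at formal $\hb_1,\hb_2$: each strand contributes with $\hb_1,\hb_2$ evaluated at the specific edge weights $(\alpha_{\mu(v_1)}-\alpha_i)/\mathfrak{d}(e_1)$ and $(\alpha_{\mu(v_2)}-\alpha_i)/\mathfrak{d}(e_2)$. The paper uses the recursion \cite[(7-12)]{Zin2} to recognize the sum over strands as the sum of residues of $(-\hb_1)^{-b_1}(-\hb_2)^{-b_2}\widetilde{\cZ}^*_{ii}(\hb_1,\hb_2,q)$ at these edge values, and then the residue theorem on $S^2$ together with the vanishing of the residue at $\hb=\infty$ (visible from the explicit form of $\widetilde{\cZ}^*_{ii}$) to convert that sum into $\fR_{\hb_1=0}\{\fR_{\hb_2=0}\{\cdot\}\}$. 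Your assertion that ``the residues at $0$ implement the smoothing of the two nodes'' skips exactly this step, which is where the residue formula in the proposition actually comes from; likewise the $\frac{1}{2\hb_1\hb_2}$ in \eqref{Z2ptdfn_e} is part of the definition of $\widetilde{\cZ}^*_{ii}$ (the $\frac12$ matching the reflection symmetry accounted for by the $\frac12\sum_{\Gamma_\pm}$), not a device that removes a ``zero-length chain.'' As written, your proposal would therefore not yield a correct proof, even though it points in the right general direction.
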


\begin{proof}
Let $\Gamma$ be a decorated graph of $A_{i}$ type, then we can break $\Gamma$ into a strand $\Gamma_{\pm}$ at $v_0=\eta(1)$, where $\pm$ are points attached to $v_0$.  Let $\mu(v_0)=i$. Thus $\cZ_{\Gamma}=\overline{\cM}_{0,(2,1)|\mathfrak{d}(v_0)}\times\cZ_{\Gamma_{\pm}}$, let
\beq
\pi_p:\cZ_{\Gamma}\to\overline{\cM}_{0,(2,1)|\mathfrak{d}(v_0)},\quad \mbox{and}\quad \pi_{e}:\cZ_{\Gamma}\to\cZ_{\Gamma_{\pm}}
\eeq
be the projections, then
\begin{eqnarray*}
\qquad \mathbf{e}(\cV_1)&=&\pi_p^*\mathbf{e}(\dot{\cV}^{(\mathfrak{d}(v_0))}_{n;\a})\pi_{e}^*\mathbf{e}(\dot{\cV}^{(d-\mathfrak{d}(v_0))}_{n;\a})\\
\frac{\mathbf{e}(T_{\mu(v_0)}\PP^{n-1})}{\mathbf{e}(\cN\cZ_{\Gamma})}&=&\frac{\prod_{k\neq i}(\alpha_i-\alpha_k)}{\prod_{k\neq i} \mathbf{e}(R^0\pi_{1*}\cS^{\vee}(\alpha_i-\alpha_k))}\\
&&\frac{\mathbf{e}(T_{\mu(v_0)}\PP^{n-1})\mathbf{e}(T_{\mu(v_0)}\PP^{n-1})}
{\mathbf{e}(\cN\cZ_{\Gamma_{\pm}})\,(\hb_{+}\!-\!\pi_e^*\psi_{+})\,(\hb_{-}\!-\!\pi_e^*\psi_{-})},
\end{eqnarray*}
where $\hb_{\pm}\!\equiv\!c_1(L_{\pm}')\!\in\!H^*(\overline{\cM}_{0,(2,1)|\mathfrak{d}(v_0)})$, $L_{\pm}'$ be the universal tangent line bundle at the marked point corresponding to $\pm$.

Let $e_{1}$ be the line $\{v_{1},v_0\}$, where $v_{1}$ is the nearest point to $+$. Let $e_{2}$ be the line $\{v_{2},v_0\}$, where $v_{2}$ is the nearest point to $-$. Let $\Gamma_e$ be the strand obtained by $\Gamma_{\pm}$ eliminating $e_{1},\,e_{2}$. Therefore we have
\begin{eqnarray}\label{A-type}
&&\sum_{d=1}^{\infty}q^d\int_{\cZ_{\Gamma}}\frac{\mathbf{e}(\cV_1)\hat{ev}_{1}^{*}\tilde\gamma}{\mathbf{e}(\cN\cZ_{\Gamma})}\\
&=&\sum_{d=1}^{\infty}\sum_{\mathfrak{d}(v_0)\ge0}\frac{q^{\mathfrak{d}(v_0)}}{\mathfrak{d}(v_0)!}\int_{\overline{\cM}_{0,(2,1)|\mathfrak{d}(v_0)}}\frac{\hat{ev}_{1}^*\tilde\gamma \,\mathbf{e}(\dot{\cV}^{(\mathfrak{d}(v_0))}_{n;\a}(\alpha_i))}{\prod_{k\neq i} \mathbf{e}(R^0\pi_{1*}\cS^{\vee}(\alpha_i-\alpha_k))}\nonumber\\
&& q^{d-\mathfrak{d}(v_0)}\int_{\cZ_{\Gamma_{\pm}}}\frac{\mathbf{e}(\dot{\cV}^{(d-\mathfrak{d}(v_0))}_{n;\a})(\alpha_i)\ev_1^*\phi_{i} \ev_2^*\phi_{i}}{\mathbf{e}(\cN\cZ_{\Gamma_{\pm}})\,(\hb_{+}\!-\!\pi_e^*\psi_{+})\,(\hb_{-}\!-\!\pi_e^*\psi_{-})}\nonumber\\
&=&\sum_{d=1}^{\infty}\sum_{\mathfrak{d}(v_0)\ge0}\frac{q^{\mathfrak{d}(v_0)}}{\mathfrak{d}(v_0)!}\int_{\overline{\cM}_{0,(2,1)|\mathfrak{d}(v_0)}}\frac{\hat{ev}_{1}^*\tilde\gamma \,\mathbf{e}(\dot{\cV}^{(\mathfrak{d}(v_0))}_{n;\a}(\alpha_i))\hb_{+}^{b_{1}}\hb_{-}^{b_{2}}}{\prod_{k\neq i} \mathbf{e}(R^0\pi_{1*}\cS^{\vee}(\alpha_i-\alpha_k))}\nonumber\\
&&q^{d-\mathfrak{d}(v_0)}\int_{\cZ_{\Gamma_{\pm}}}\frac{\mathbf{e}(\dot{\cV}^{(d-\mathfrak{d}(v_0))}_{n;\a}(\alpha_i))\ev_1^*\phi_{i} \ev_2^*\phi_{i}\pi_e^*\psi_{+}^{-(b_1+1)}\pi_e^*\psi_{-}^{-(b_2+1)}}{\mathbf{e}(\cN\cZ_{\Gamma_{\pm}})}\nonumber\\
&=&\sum_{d=1}^{\infty}\sum_{\mathfrak{d}(v_0)\ge0}\frac{q^{\mathfrak{d}(v_0)}}{\mathfrak{d}(v_0)!}\int_{\overline{\cM}_{0,(2,1)|\mathfrak{d}(v_0)}}\frac{\hat{ev}_{1}^*\tilde\gamma \,\mathbf{e}(\dot{\cV}^{(\mathfrak{d}(v_0))}_{n;\a}(\alpha_i))\hb_{+}^{b_{1}}\hb_{-}^{b_{2}}}{\prod_{k\neq i} \mathbf{e}(R^0\pi_{1*}\cS^{\vee}(\alpha_i-\alpha_k))}q^{d-\mathfrak{d}(v_0)}\nonumber\\
&&\dot{\fC}_{i}^{\mu(v_{1})}(\mathfrak{d}(e_{1}))\bigg(-\frac{\alpha_{\mu(v_{1})}-\alpha_i}{\mathfrak{d}(e_{1})}\bigg)^{-(b_1+1)}\ddot{\fC}_{i}^{\mu(v_{2})}
(\mathfrak{d}(e_{2}))\bigg(-\frac{\alpha_{\mu(v_{2})}-\alpha_{i}}{\mathfrak{d}(e_{2})}\bigg)^{-(b_{2}+1)}\nonumber\\
&&\int_{\cZ_{\Gamma_{e}}}\frac{\mathbf{e}(\dot{\cV}^{(d-\mathfrak{d}(v_0))-\mathfrak{d}(e_1)-\mathfrak{d}(e_2)}_{n;\a}(\alpha_i))\ev_1^*\phi_{\mu(v_{1})} \ev_2^*\phi_{\mu(v_{2})}}{\mathbf{e}(\cN\cZ_{\Gamma_{e}})(\hbar_1-\psi_1)(\hbar_2-\psi_2)}|_{\hbar_1=\frac{\alpha_{\mu(v_{1})}-\alpha_i}{\mathfrak{d}(e_{1})},
\hbar_2=\frac{\alpha_{\mu(v_{2})}-\alpha_i}{\mathfrak{d}(e_{2})}},\nonumber
\end{eqnarray}
where
$$
\dot{\fC}_{i}^{\mu(v_1)}(d)=\frac{\prod_{r=1}^{\ell}\prod_{l=1}^{a_{r}d}(a_{r}\alpha_i+\frac{l}{d}(\alpha_{\mu(v_1)}-\alpha_i))}
{d{\prod_{l=1}^{d}\prod_{m=1}^n\atop(l,m)\neq(d,k)}(\alpha_i-\alpha_m+\frac{l}{d}(\alpha_{\mu(v_1)}-\alpha_i))},
$$

$$
\ddot{\fC}_{i}^{\mu(v_2)}(d)=\frac{\prod_{r=1}^{\ell}\prod_{l=0}^{a_{r}d-1}(a_{r}\alpha_i+\frac{l}{d}(\alpha_{\mu(v_2)}-\alpha_i))}
{d{\prod_{l=1}^{d}\prod_{m=1}^n\atop(l,m)\neq(d,k)}(\alpha_{i}-\alpha_m+\frac{l}{d}(\alpha_{\mu(v_2)}-\alpha_i))}.
$$

Denote by
\beq
\overline{\cF}^{(b_1,b_2)}=\sum_{\mathfrak{d}(v_0)=0}^{\infty}\frac{q^{\mathfrak{d}(v_0)}}{\mathfrak{d}(v_0)!}\int_{\overline{\cM}_{0,(2,1)|\mathfrak{d}(v_0)}}\frac{\phi_{i}|_{p_i}\hat{ev}_{1}^*\tilde\gamma \,\mathbf{e}(\dot{\cV}^{(\mathfrak{d}(v_0))}_{n;\a}(\alpha_i))\psi_{1}^{b_{1}}\psi_{2}^{b_{2}}}{\prod_{k\neq i} \mathbf{e}(R^0\pi_{1*}\cS^{\vee}(\alpha_i-\alpha_k))}.
\eeq
Let $D_{1\hat{1},2}\subset \overline{\cM}_{0,(2,1)|\mathfrak{d}(v_0)}$ be the divisor whose general element is a two-component rational curve, with one component carrying the marked point 1 and $\hat{1}$ and the other carrying the marked point 2, where $\hat{1}$ means the infinitesimally marking. Then $\psi_2=D_{1\hat{1},2}$ on $\overline{\cM}_{0,(2,1)|\mathfrak{d}(v_0)}$, and
\begin{eqnarray*}
\overline{\cF}^{(b_1,b_2)}&=&\overline{\cF}^{(b_1,0)}\cF_{n}^{(0,b_2-1)}(\alpha_i,q)\\
&=&\overline{\cF}^{(0,0)}\cF_{n}^{(0,b_1-1)}(\alpha_i,q)\cF_{n}^{(0,b_2-1)}(\alpha_i,q).
\end{eqnarray*}
Because $\overline{\cM}_{0,(2,1)|\mathfrak{d}(v_0)}$ canonical isomorphic to the universal curve $$\cC_{\overline{\cM}_{0,2|\mathfrak{d}(v_0)}}\to \overline{\cM}_{0,2|\mathfrak{d}(v_0)},$$
we have
$$
\overline{\cF}^{(0,0)}=q\frac{d}{dq}\cF_{n}^{(0,0)}(\alpha_i,q)+\alpha_i.
$$

By the recursion formula \cite[(7-12)]{Zin2} and the formulas in page 484 of \cite[Section 7]{Zin2}
\begin{eqnarray*}
&&\frac{1}{2}\sum_{\Gamma_{\pm}}q^{d-\mathfrak{d}(v_0)}\dot{\fC}_{i}^{\mu(v_1)}(\mathfrak{d}(e_{1}))\bigg(-\frac{\alpha_{\mu(v_{1})}-\alpha_i}{\mathfrak{d}(e_{1})}\bigg)^{-(b_1+1)} \ddot{\fC}_{i}^{\mu(v_2)}(\mathfrak{d}(e_{2}))\bigg(-\frac{\alpha_{\mu(v_{2})}-\alpha_{i}}{\mathfrak{d}(e_{2})}\bigg)^{-(b_{2}+1)}\\
&&
\int_{\cZ_{\Gamma_{e}}}\frac{\mathbf{e}(\dot{\cV}^{(d-\mathfrak{d}(v_0))-\mathfrak{d}(e_1)-\mathfrak{d}(e_2)}_{n;\a}(\alpha_i))\ev_1^*\phi_{\mu(v_{1})} \ev_2^*\phi_{\mu(v_{2})}}{\mathbf{e}(\cN\cZ_{\Gamma_{e}})(\hbar_1-\psi_1)(\hbar_2-\psi_2)}|_{\hbar_1=\frac{\alpha_{\mu(v_{1})}-\alpha_i}{\mathfrak{d}(e_{1})},
\hbar_2=\frac{\alpha_{\mu(v_{2})}-\alpha_i}{\mathfrak{d}(e_{2})}}
\end{eqnarray*}

$$
=\fR_{\hb_1=\frac{\alpha_{\mu(v_{1})}-\alpha_i}{\mathfrak{d}(e_{1})}}\left\{\fR_{\hb_2=\frac{\alpha_{\mu(v_{2})}-\alpha_{i}}{\mathfrak{d}(e_{2})}}
\left\{(-\hb_1)^{-b_1}(-\hb_2)^{-b_2}\widetilde{\cZ}_{ii}^*(\hb_1,\hb_2,q)
 \right\}\right\}.
$$
Then by the residue theorem on $S^2$, and the vanishing of the residue at $\infty$, which can be directly obtained from the expression of $\widetilde{\cZ}_{ii}^*(\hb_1,\hb_2,q)$. We have
\beq\label{A-type1}
\frac{1}{2}\sum_{\Gamma_{\pm}}q^{d-\mathfrak{d}(v_0)}\int_{\cZ_{\Gamma_{\pm}}}\frac{\mathbf{e}(\dot{\cV}^{(d-\mathfrak{d}(v_0))}_{n;\a}(\alpha_i))\ev_1^*\phi_{i} \ev_2^*\phi_{i}\pi_e^*\psi_{+}^{-(b_1+1)}\pi_e^*\psi_{-}^{-(b_2+1)}}{\mathbf{e}(\cN\cZ_{\Gamma_{\pm}})}
\eeq
$$
=\fR_{\hb_1=0}\left\{\fR_{\hb_2=0}
\left\{(-\hb_1)^{-b_1}(-\hb_2)^{-b_2}\widetilde{\cZ}_{ii}^*(\hb_1,\hb_2,q)
 \right\}\right\}.
$$
Combining formula (\ref{A-type}) and (\ref{A-type1}),
\begin{eqnarray*}
&&\frac{1}{2}\sum_{d=1}^{\infty}q^d\int_{\cZ_{\Gamma}}\frac{\mathbf{e}(\cV_1)\hat{ev}_{1}^{*}\tilde\gamma}{\mathbf{e}(\cN\cZ_{\Gamma})}\\
&=&(\phi_{i}|_{p_{i}})^{-1}\sum_{b_1\ge0,b_2\ge0}\bigg((q\frac{d}{dq}\cF_{n}^{(0,0)}(\alpha_i,q)+\alpha_i)
\frac{\cF_{n}^{(0,0)}(\alpha_i,q)^{b_1}}{b_1!}\frac{\cF_{n}^{(0,0)}(\alpha_i,q)^{b_2}}
{b_2!}\bigg)\\
&&\fR_{\hb_1=0}\left\{\fR_{\hb_2=0}
\left\{(-\hb_1)^{-b_1}(-\hb_2)^{-b_2}\widetilde{\cZ}_{ii}^*(\hb_1,\hb_2,q)
 \right\}\right\}.
\end{eqnarray*}

Thus
\begin{eqnarray*}
\cA_{i}&=&\frac{1}{2}\sum_{d=1}^{\infty}q^d\int_{\cZ_{\Gamma}}\frac{\mathbf{e}(\cV_1)\hat{ev}_{1}^{*}\tilde\gamma}{\mathbf{e}(\cN\cZ_{\Gamma})}\\
&=&(\phi_{i}|_{p_{i}})^{-1}\bigg(q\frac{d}{dq}\cF_{n}^{(0,0)}(\alpha_i,q)+\alpha_i\bigg)\\
&&\fR_{\hb_1=0}\left\{\fR_{\hb_2=0}
\left\{\mbox{e}^{-\cF_{n}^{(0,0)}(\alpha_i,q)(\frac{1}{\hb_1}+\frac{1}{\hb_2})}\widetilde{\cZ}_{ii}^*(\hb_1,\hb_2,q)
 \right\}\right\}.
\end{eqnarray*}

\end{proof}

When the domain curve is mapped to the fixed point $p_{i}$, then the correpondent decorated graph $\Gamma$ is a vertex. They will be called $B_{i}$-types. In a graph of the $B_{i}$-type, the infinitesimally marked point~$1$ is attached to a vertex labeled~$i$.

Let $\cB_{i}(q)$ be the total contribution from type $B_{i}$ graphs. Then

\begin{prop}\label{equivred_prp_B}
For every $i\in\mathbf{n}$,
\begin{eqnarray*}
\cB_{i}(q)&=&\frac{1}{24}q\frac{d}{d q}\bigg(c_{i}(\alpha)\cF^{(0,0)}(\alpha_i,q)-\log \dot\Phi^{(0)}(\alpha_i,q)\bigg),
\end{eqnarray*}
where  $c_{i}(\alpha)=\sum_{k\neq i}\frac{1}{\alpha_k-\alpha_i}+\sum_{k=1}^{\ell}\frac{1}{a_k\alpha_i}$.
\end{prop}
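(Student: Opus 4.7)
The plan is to carry out $\TT$-equivariant localization on the single-vertex $B_i$-stratum, parallel to the $A_i$-computation of Proposition \ref{equivred_prp_A} but in the genus one constant-map setting. For a $B_i$-graph of vertex degree $d$, the entire genus one curve contracts to $p_i$ and all the degree is absorbed into the base points; up to a finite-group automorphism, $\cZ_\Gamma\cong\overline{\cM}_{1,(0,1)|d}$, and $\hat{ev}_1^*\tilde\gamma$ restricts to the constant $\alpha_i$.

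First I would identify the integrand $\mathbf{e}(\cV_1)/\mathbf{e}(\cN\cZ_\Gamma)$ on this stratum. Both the virtual normal bundle and $\cV_1$ restrict to constant maps as direct sums, indexed by the characters $\alpha_k-\alpha_i$ ($k\ne i$, from $T_{p_i}\PP^{n-1}$) and $a_k\alpha_i$ ($k=1,\ldots,\ell$, from $E|_{p_i}$), of complexes of the form $\chi\otimes R^{\bullet}\pi_*\cO_C$. Using the Hodge bundle $\mathbb{E}$ pulled back from $\overline{\cM}_{1,1}$ together with $c(\mathbb{E})=1+\lambda_1$ and $\lambda_1^2=0$, each such character $\chi$ contributes a factor $1-\lambda_1/\chi$ after taking equivariant Euler classes. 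Since $\lambda_1^2=0$, the product over all $\chi$ collapses to $1-c_i(\alpha)\lambda_1$, multiplied by the constant prefactor $\prod_k a_k\alpha_i/\prod_{k\ne i}(\alpha_k-\alpha_i)$; this prefactor cancels against the $(\phi_i|_{p_i})^{-1}$ factor coming from the vertex normalization, leaving $\alpha_i$ from the insertion $\hat{ev}_1^*\tilde\gamma$ together with a clean $\lambda_1$-twist.

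Next I would reduce to a base-point hypergeometric integral via the forgetful morphism $\overline{\cM}_{1,(0,1)|d}\to\overline{\cM}_{1,1}$, analogously to the reduction through $\cC_{\overline{\cM}_{0,2|\mathfrak{d}(v_0)}}$ already used in Proposition \ref{equivred_prp_A}. The fiber integration over the $d$ base points assembles the sum over $d$ into a Hodge-twisted avatar of $\dot\cY(\alpha_i,\hbar,q)$; applying Corollary \ref{expan}, its $\lambda_1$-free part is identified with $\dot\Phi^{(0)}(\alpha_i,q)$, while its $\lambda_1$-coefficient is extracted via the $q$-derivative relation encoded by $\cF^{(0,0)}(\alpha_i,q)$ in the same spirit as formula (\ref{Fred_e}). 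The Hodge integral $\int_{\overline{\cM}_{1,1}}\lambda_1=\tfrac{1}{24}$ produces the global $\tfrac{1}{24}$ prefactor, and the $q\tfrac{d}{dq}$ enters because the infinitesimal marking differentiates the base-point generating series in $q$. Combining the two pieces assembles into $\tfrac{1}{24}q\tfrac{d}{dq}\bigl(c_i(\alpha)\cF^{(0,0)}(\alpha_i,q)-\log\dot\Phi^{(0)}(\alpha_i,q)\bigr)$.

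The main obstacle will be the careful bookkeeping of the Hodge-bundle contributions: verifying that the coefficient of $\lambda_1$ assembles exactly into $c_i(\alpha)$ with no shift arising from the infinitesimally weighted marking convention, and that the identification with $\dot\Phi^{(0)}$ and $\cF^{(0,0)}$ via Corollary \ref{expan} matches at the correct order in $\hbar$ (i.e.\ at $\hbar^0$, not at higher-order corrections $\dot\Phi^{(m)}$ for $m\ge1$). Both checks follow the same template as the $A_i$-computation above, where the divisor $D_{1\hat 1,2}$ trick reduced the $\psi$-class bookkeeping to the series $\cF_n^{(0,0)}$; here the analogue is the $\lambda_1$-class bookkeeping on $\overline{\cM}_{1,1}$ combined with the vanishing of the residue at $\infty$ already invoked in the previous proof.
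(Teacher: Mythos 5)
Your setup---localization on the single-vertex stratum $\overline{\cM}_{1,(0,1)|d}$, the insertion $\hat{ev}_1^*\tilde\gamma$ producing the overall $q\frac{d}{dq}$, and the collapse of the Hodge-twisted Euler-class ratio to $1+c_i(\alpha)\mathbf{e}(\EE)$ using $\lambda_1^2=0$ (note the sign: the product is $1+c_i(\alpha)\lambda_1$, not $1-c_i(\alpha)\lambda_1$)---agrees with the paper's first steps. The genuine gap is in your reduction step. You propose to push forward along a forgetful map $\overline{\cM}_{1,(0,1)|d}\to\overline{\cM}_{1,1}$, assemble the base-point integrals into a ``Hodge-twisted avatar of $\dot\cY$'', identify its $\lambda_1$-free part with $\dot\Phi^{(0)}$ via Corollary \ref{expan}, and obtain the global $\tfrac{1}{24}$ from $\int_{\overline{\cM}_{1,1}}\lambda_1$. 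This cannot work as stated: Corollary \ref{expan} is only the definition of the $\hbar$-expansion of $\e^{-\cF^{(0,0)}/\hbar}\dot\cY$ and says nothing about integrals of the classes $\mathbf{e}(R^0\pi_{1*}\cS^{\vee}(\cdot))$ over the genus-one weighted spaces; no such pushforward computation is available; and, most tellingly, your mechanism produces no logarithm, whereas the answer contains $-\log\dot\Phi^{(0)}(\alpha_i,q)$, whose $\tfrac{1}{24}$ prefactor cannot come from $\int_{\overline{\cM}_{1,1}}\lambda_1$ because that term carries no $\lambda_1$ at all.

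The missing input is exactly what the paper invokes: the two genus-reduction identities of Kim--Lho \cite[Theorem 2.6]{KB} for the integrand $F^{(1,0)}_d=\mathrm{Q}_{v}\,\widetilde{Q}$, namely
\begin{equation*}
\int_{\overline{\cM}_{1,0|d}}\mathbf{e}(\EE)\,F^{(1,0)}_{d}=\frac{1}{24}\int_{\overline{\cM}_{0,2|d}}F^{(0,2)}_{d},
\qquad
\sum_{d\ge1}\frac{q^d}{d!}\int_{\overline{\cM}_{1,0|d}}F^{(1,0)}_{d}
=\frac{1}{24}\log\Big(\sum_{d\ge0}\frac{q^d}{d!}\int_{\overline{\cM}_{0,3|d}}F^{(0,3)}_{d}\Big).
\end{equation*}
The first turns the $\lambda_1$-part into $\frac{1}{24}\cF^{(0,0)}(\alpha_i,q)$ (this is where the $\int\lambda_1=\tfrac1{24}$ heuristic is made rigorous), while the second, combined with $\sum_{d}\frac{q^d}{d!}\int_{\overline{\cM}_{0,3|d}}F^{(0,3)}_{d}=1/\dot\Phi^{(0)}(\alpha_i,q)$ from \cite[Proposition 4.1]{Zin2}, produces the term $-\frac{1}{24}\log\dot\Phi^{(0)}(\alpha_i,q)$. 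These identities, proved in \cite{KB} by analysis of the weighted-pointed moduli spaces rather than by any fibration over $\overline{\cM}_{1,1}$, are the essential content of the proposition; without them, or a substitute argument that actually generates the logarithm, your sketch does not close.
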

\begin{proof}
The proof is exact the same as the calculation of the vertex contribution in \cite{KB}. We sketch it as following.
Let $\Gamma$ be a type $B_{i}$ decorated graph, it is just vertex $v$ over $p_{i}$. Thus $\cZ_{\Gamma}=\overline{\cM}_{1,(0,1)|d}$. The contribution

\beq
\cB_{i}(q)=\sum_{d=1}^{\infty}\frac{q^d}{d!}
\int_{\overline{\cM}_{1,(0,1)|d}}\frac{\hat{ev}_{1}^{*}\tilde\gamma\,\mathbf{e}(\mathbb{E} ^\vee \otimes T_{p_{i}}\PP^{n-1})}{\mathbf{e} (T_{p_{i}}\PP^{n-1})}
\mathbf{e}(\cV_1(\alpha_i))\mathrm{Q}_{v}
\eeq
$$
=q\frac{d}{d q}\bigg(\sum_{d=1}^{\infty}\frac{q^d}{d!}
\int_{\overline{\cM}_{1,0|d}}\frac{\mathbf{e}(\mathbb{E} ^\vee \otimes T_{p_{i}}\PP^{n-1})}{\mathbf{e} (T_{p_{i}}\PP^{n-1})}
\mathbf{e}(\cV_1(\alpha_i))\mathrm{Q}_{v}\bigg)
$$
where $\mathbb{E}$ is the Hodge bundle, \beq\label{Q-def}
\mathrm{Q}_{v}=\frac{1}{\prod\limits_{k\neq
i}\!\!\mathbf{e}(H^0(C_{v},\sO_{C_{v}}(D_1)|_{D_1})(\alpha_i\!-\!\alpha_k))},\eeq $\sO_{C_{v}}(D_1)\cong\cS^{\vee}|_{C_{v}}$.

$$
\mathbf{e}(\cV_1(\alpha_i))=\frac{\mathbf{e}(E|_{p_{i}})}{\mathbf{e}(\mathbb{E}^{\vee}\otimes E|_{p_{i}} )}\widetilde{Q},
$$
where

\beq\label{Q-def}
\widetilde{Q}=\frac{1}{\prod_{i=1}^{\ell}\mathbf{e}(H^0(C_{v},\sO_{C_{v}}(\widetilde{D}_i)|_{\widetilde{D}_i})(\alpha_i))},\eeq
$\sO_{C_{v}}(\widetilde{D}_i)\cong\cS^{\vee\otimes a_i}|_{C_{v}}$.

Let $c_{i}(\alpha)$ determined by
$$
1+c_{i}(\alpha)\mathbf{e}(\EE)=\frac{\mathbf{e}(\mathbb{E} ^\vee \otimes T_{p_{i}}\PP^{n-1})}{\mathbf{e} (T_{p_{i}}\PP^{n-1})}\frac{\mathbf{e}(E|_{p_{i}})}{\mathbf{e}(\mathbb{E}^{\vee}\otimes E|_{p_{i}} )}.
$$
Thus
$$c_{i}(\alpha)=\sum_{k\neq i}\frac{1}{\alpha_k-\alpha_i}+\sum_{k=1}^{\ell}\frac{1}{a_k\alpha_i}.
$$
Denote by $F^{(1,0)}_{d}=Q_v\,\widetilde{Q}$, then
\begin{eqnarray*}
&&\int_{\overline{\cM}_{1,0|d}}\frac{\mathbf{e}(\mathbb{E} ^\vee \otimes T_{p_{i}}\PP^{n-1})}{\mathbf{e} (T_{p_{i}}\PP^{n-1})}
\mathbf{e}(\cV_1(\alpha_i))\mathrm{Q}_{v})\\
&&=\int_{\overline{\cM}_{1,0|d}}(1+c_{i}(\alpha)\mathbf{e}(\EE))F^{(1,0)}_{d}.
\end{eqnarray*}

For nonnegative integers $g$ and $m$, the above expression for $F^{(1,0)}_d$ also defined as an element in $H^*(\overline{M}_{g,m|d},\QQ_{\alpha})$, which can be written as a polynomial of diagonal classes and the psi classes. By the proof in \cite[Theorem 2.6]{KB}, we have
$$
\int_{\overline{\cM}_{1,0|d}}\mathbf{e}(\EE)\,F^{(1,0)}_{d}=\frac{1}{24}\int_{\overline{\cM}_{0,2|d}}F^{(0,2)}_{d},
$$
$$
\sum_{d=1}^{\infty}\frac{q^d}{d!}\int_{\overline{\cM}_{1,0|d}}F^{(1,0)}_{d}
=\frac{1}{24}\log(\sum_{d=0}^{\infty}\frac{q^d}{d!}\int_{\overline{\cM}_{0,3|d}}F^{(0,3)}_{d}).
$$
Therefore
\begin{eqnarray}\label{part1}
&& \sum_{d=1}^{\infty}\frac{q^d}{d!}
\int_{\overline{\cM}_{1,0|d}}\mathbf{e}(\EE)\,F^{(1,0)}_{d}\\
&=&\frac{1}{24}\sum_{d=1}^{\infty}\frac{q^d}{d!}
 \int_{\overline{\cM}_{0,(2,0)|d}}F^{(0,2)}_{d}\nonumber\\
&=&\frac{1}{24}\sum_{d=1}^{\infty}\frac{q^d}{d!}
 \int_{\overline{\cM}_{0,(2,0)|d}}\frac{\prod_{k\neq i}(\alpha_i-\alpha_k)\mathbf{e}(\dot\cV_{n;\a}^{(d)}(\alpha_i))}{\prod_{k\neq i} \mathbf{e}(R^0\pi_{1*}\cS^{\vee}(\alpha_i-\alpha_k))}\nonumber\\
&=&\frac{1}{24}\cF^{(0,0)}(\alpha_i,q).\nonumber
\end{eqnarray}
By \cite[Proposition 4.1]{Zin2}, we have
\beq\label{part2}
\sum_{d=0}^{\infty}\frac{q^d}{d!}\int_{\overline{\cM}_{0,3|d}}F^{(0,3)}_{d}=\frac{1}{\dot\Phi^{(0)}(\alpha_i,q)}.
\eeq
Thus combining (\ref{part1}) and (\ref{part2}), we have

\begin{eqnarray*}
\cB_{i}(q)&=&\frac{1}{24}q\frac{d}{d q}\bigg(c_{i}(\alpha)\cF^{(0,0)}(\alpha_i,q)-\log \dot\Phi^{(0)}(\alpha_i,q)\bigg).
\end{eqnarray*}
\end{proof}
Combining Proposition \ref{equivred_prp_A} and Proposition \ref{equivred_prp_B}, we have
\begin{theo} \label{The}For Calabi-Yau manifold $X\subset \PP^{n-1}$ which is complete intersection,

\beq\label{form}
q\frac{d}{d q}G_{1,0}=\bigg\{\sum_{i\in\mathbf{n}}\bigg(\cA_{i}(q)+\frac{1}{24}q\frac{d}{d q}\bigg(c_{i}(\alpha)\cF^{(0,0)}(\alpha_i,q)-\log \dot\Phi^{(0)}(\alpha_i,q)\bigg)\bigg)\bigg\}\bigg|_{\alpha=0},
\eeq
where $c_{i}(\alpha)=\sum_{k\neq i}\frac{1}{\alpha_k-\alpha_i}+\sum_{k=1}^{\ell}\frac{1}{a_k\alpha_i}$.
\end{theo}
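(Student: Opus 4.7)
My plan is to combine the two localization computations already established in Propositions \ref{equivred_prp_A} and \ref{equivred_prp_B} and then take the nonequivariant limit. The starting point is the identity
$$\sum_{d=1}^{\infty} q^d \langle \tilde\gamma\rangle_{1,0|1,d} \;=\; q\frac{d}{dq} G_{1,0},$$
which follows from the divisor-type equation for the $0^+$-weighted marking (the infinitesimally weighted point produces precisely a $q\,\partial_q$ on the generating function $G_{1,0}$, once one uses $\iota_*[\overline{Q}_{1,0|1}(X,d)]^{\rm vir} = \mathbf e(\cV_1)\cap[\overline{Q}_{1,0|1}(\PP^{n-1},d)]$ and the fact that the infinitesimally weighted evaluation maps to $[\CC^n/\CC^*]$ with $\hat{ev}_1^*\tilde\gamma$ degenerating to the divisor class). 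This reduces the problem to computing the $\TT$-equivariant integral
$$\int_{\overline{Q}_{1,0|1}(\PP^{n-1},d)} \mathbf e(\cV_1)\,\hat{ev}_1^*\tilde\gamma$$
via Atiyah--Bott localization and then taking $\alpha\to 0$.

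Next I would catalogue the $\TT$-fixed loci of $\overline{Q}_{1,0|1}(\PP^{n-1},d)$. Because the moduli space is smooth and the target is $\PP^{n-1}$, a $\TT$-fixed quasimap is encoded by a decorated graph whose edges correspond to nonconstant rational components mapping as covers of torus-invariant lines, and whose vertices correspond to contracted components carrying the genus. Since $g=1$, every connected decorated graph either contains exactly one loop (the $A_i$-graphs in which the genus is absorbed into the cycle, and the infinitesimally marked point $\hat 1$ is attached to a vertex of the loop labeled $i$) or has no loop, in which case the entire curve is contracted to a single $\TT$-fixed point $p_i$ and the graph is a single vertex with the marking (the $B_i$-graphs). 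This dichotomy matches the setup of \cite{KB} and is what the local computations already address.

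The heart of the proof is then just bookkeeping. By Proposition \ref{equivred_prp_A}, the total contribution of the $A_i$-graphs is $\cA_i(q)$, and by Proposition \ref{equivred_prp_B}, the total contribution of the $B_i$-graphs is
$$\cB_i(q) \;=\; \frac{1}{24}\, q\frac{d}{dq}\Bigl( c_i(\alpha)\cF^{(0,0)}(\alpha_i,q) - \log\dot\Phi^{(0)}(\alpha_i,q)\Bigr).$$
Summing over $i\in\mathbf n$ and over the two graph types gives
$$q\frac{d}{dq}G_{1,0} \;=\; \sum_{i\in\mathbf n}\bigl(\cA_i(q)+\cB_i(q)\bigr)\Big|_{\alpha=0},$$
which is exactly the claimed formula.

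The one nontrivial step that needs justification is the specialization $\alpha=0$. A priori the individual summands $\cA_i(q)$ and $\cB_i(q)$ have poles along the diagonals $\alpha_i=\alpha_k$ and along $\alpha_i=0$ (visible in $c_i(\alpha)$ and in the factor $(\phi_i|_{p_i})^{-1}$), but the total sum $\sum_i (\cA_i+\cB_i)$ is the localization of a class defined over the nonequivariant Chow ring of $\overline{Q}_{1,0|1}(\PP^{n-1},d)$ and hence must be regular at $\alpha=0$. The main obstacle of the argument is therefore the bookkeeping in Proposition \ref{equivred_prp_A}: ensuring that the sum over all $A_i$-graphs (i.e.\ over all ways of distributing degree along the loop and the vertex, and all choices of edge lengths) really does reorganize into the double residue of $\widetilde\cZ^*_{ii}(\hb_1,\hb_2,q)$ weighted by the exponential $\exp(-\cF^{(0,0)}(\alpha_i,q)(\hb_1^{-1}+\hb_2^{-1}))$. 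Granting Propositions \ref{equivred_prp_A} and \ref{equivred_prp_B} (both proved above), the present theorem is an immediate combination, so my proof will consist of little more than assembling these two statements and noting regularity of the sum at $\alpha=0$.
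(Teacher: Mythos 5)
Your proposal is correct and follows essentially the same route as the paper: the paper's own proof of this theorem is literally the one-line combination of Propositions \ref{equivred_prp_A} and \ref{equivred_prp_B} together with the identity $\sum_d q^d\langle\tilde\gamma\rangle_{1,0|1,d}=q\frac{d}{dq}G_{1,0}$ and the $A_i$/$B_i$ dichotomy of fixed graphs set up in Section 2.2. Your added remark on regularity of the total sum at $\alpha=0$ is a harmless (and reasonable) supplement that the paper leaves implicit.
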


\section{Calculation}
In this section we work out the explicit expression of (\ref{form}) by using the hypergeometric series and their properties established in \cite{Popa}, \cite{Zin1} and \cite{Zin2}.

We define power series $L_{n},\xi_{n}\in\QQ_{\alpha}[\mathbf{x}][[q]]$ by
\begin{alignat}{2}
L_{n}&\in \mathbf{x}+q\QQ_{\alpha}[\mathbf{x}][[q]], &\qquad
\tilde{\mathbf{s}}_n\big(L_{n}(\mathbf{x},q)\big)-q\a^{\a}L_{n}(\mathbf{x},q)^{|\a|}&=\tilde{\mathbf{s}}_n(\mathbf{x}), \\
\xi_{n}&\in q\QQ_{\alpha}[\mathbf{x}][[q]],&\qquad
\mathbf{x}+q\frac{d}{d q}\xi_{n}(\mathbf{x},q)&=L_{n}(\mathbf{x},q),\notag
\end{alignat}
where $\tilde{\mathbf{s}}_r(y)$ is the r-th elementary symmetric polynomial in $\{y-\alpha_k\}$. By \cite[(4-9)]{Zin2}, $\xi_{n}(\alpha_i,q)=\cF_{n}^{(0,0)}(\alpha_i,q)$. Let $$L(q)=(1-\a^{\a}q)^{-\frac{1}{n}},\quad\quad\mu(q)=\int_{0}^q\frac{L(u)-1}{u}du,$$ then

 \beq\label{L-for}
 L_{n}(\mathbf{x},q)=L(q)\mathbf{x}+\sum_{d=0}^{\infty}f_d(\mathbf{x},\alpha)q^d,
 \eeq
 where $f_d(\mathbf{x},\alpha)\in \QQ_{\alpha}[\mathbf{x}]$ with $\mathbf{x}|f_d(\mathbf{x},\alpha)$, and has no $\mathbf{x}$ term with constant coefficient,  $f_d(\mathbf{x},0)=0$.

 \beq\label{xi-for}
 \xi_{n}(\mathbf{x},q)=\mu(q)\mathbf{x}+\sum_{d=1}^{\infty}g_d(\mathbf{x},\alpha)q^d,
 \eeq
 where $g_d(\mathbf{x},\alpha)\in \QQ_{\alpha}[\mathbf{x}]$  with $\mathbf{x}|g_d(\mathbf{x},\alpha)$, and has no $\mathbf{x}$ term with constant coefficient, $g_d(\mathbf{x},0)=0$.

By the residue theorem on $S^2$,
\begin{eqnarray*}
\sum_{i=1}^n\sum_{k\neq i}\frac{\xi_{n}(\alpha_i,q)}{\alpha_k-\alpha_i}&=&-\sum_{i}\fR_{z=\alpha_i}\sum_{k\neq j}\frac{\mu(q)z}{(z-\alpha_j)(z-\alpha_k)}\\
&&-\sum_{d=0}^{\infty}q^d\sum_{i}\fR_{z=\alpha_i}\sum_{k\neq j}\frac{g_d(z,\alpha)}{(z-\alpha_j)(z-\alpha_k)}\\
&=&\fR_{z=\infty}\sum_{k\neq j}\frac{\mu(q)z}{(z-\alpha_j)(z-\alpha_k)}\\
&&+\sum_{d=0}^{\infty}q^d\fR_{z=\infty}\sum_{k\neq j}\frac{g_d(z,\alpha)}{(z-\alpha_j)(z-\alpha_k)}\\
&=&-\binom{n}{2}\mu(q)+\sum_{d=0}^{\infty}q^d\fR_{z=\infty}\sum_{k\neq j}\frac{g_d(z,\alpha)}{(z-\alpha_j)(z-\alpha_k)}.
\end{eqnarray*}
By (\ref{xi-for}),
$$
\bigg(\fR_{z=\infty}\sum_{k\neq j}\frac{g_d(z,\alpha)}{(z-\alpha_j)(z-\alpha_k)}\bigg)\bigg|_{\alpha=0}=0.
$$
Thus
$$
\bigg(\sum_{i=1}^n\sum_{k\neq i}\frac{\xi_{n}(\alpha_i,q)}{\alpha_k-\alpha_i}\bigg)\bigg|_{\alpha=0}=-\binom{n}{2}\mu(q).
$$
\begin{eqnarray*}
\sum_{i=1}^{n}\frac{\xi_{n}(\alpha_i,q)}{a_k\alpha_i}&=&\frac{1}{a_k}\sum_{i=1}^n\fR_{z=\alpha_i}
\sum_{j=1}^{n}\frac{\xi_{n}(z,q)}{z(z-\alpha_j)}\\
&=&\frac{1}{a_k}\sum_{i=1}^n\fR_{z=\alpha_i}
\sum_{j=1}^{n}\frac{\mu(q)}{(z-\alpha_j)}\\
&&+\frac{1}{a_k}\sum_{d=0}^{\infty}q^d\sum_{i=1}^n\fR_{z=\alpha_i}
\sum_{j=1}^{n}\frac{g_d(z,\alpha)}{z(z-\alpha_j)}\\
&=&-\frac{1}{a_k}\fR_{z=\infty}
\sum_{j=1}^{n}\frac{\mu(q)}{(z-\alpha_j)}\\
&&-\frac{1}{a_k}\sum_{d=0}^{\infty}q^d\sum_{i=1}^n\fR_{z=\infty}
\sum_{j=1}^{n}\frac{g_d(z,\alpha)}{z(z-\alpha_j)}\\
&&-\frac{1}{a_k}\sum_{d=0}^{\infty}q^d\sum_{i=1}^n\fR_{z=0}
\sum_{j=1}^{n}\frac{g_d(z,\alpha)}{z(z-\alpha_j)},
\end{eqnarray*}
by (\ref{xi-for})
\begin{eqnarray*}
\frac{1}{a_k}\sum_{d=0}^{\infty}q^d\bigg(\sum_{i=1}^n\fR_{z=0,\infty}
\sum_{j=1}^{n}\frac{g_d(z,\alpha)}{z(z-\alpha_j)}\bigg)\bigg|_{\alpha=0}=0.
\end{eqnarray*}
Thus
$$
\bigg(\sum_{k=1}^{\ell}\sum_{i=1}^{n}\frac{\xi_{n}(\alpha_i,q)}{a_k\alpha_i}\bigg)\bigg|_{\alpha=0}
=\sum_{k=1}^{\ell}\frac{n}{a_k}.
$$
 By \cite[(4-10)]{Zin2}

$$
\dot\Phi^{(0)}(\alpha_i,q)|_{\alpha=0}=L(q)^{\frac{\ell+1}{2}}.
$$
Therefore
\begin{eqnarray}\label{B-va}
&&\bigg(\sum_{i=1}^n\cB_{i}(q)\bigg)\bigg|_{\alpha=0}\\
&=&\frac{1}{24}\bigg(\sum_{i=1}^n\bigg(c_{i}(\alpha)\cF^{(0,0)}(\alpha_i,q)-\log \dot\Phi^{(0)}(\alpha_i,q)\bigg)\bigg)\bigg|_{\alpha=0}\nonumber\\
&=&\frac{1}{24}\bigg(\sum_{i=1}^{\ell}(\frac{n}{a_i}-\binom{n}{2})\mu(q)-\frac{n(\ell+1)}{2}\log L(q)\bigg).\nonumber
\end{eqnarray}

For each $p\in\mathbf{n}$, let $\sigma_p$ be the $p$-th elementary symmetric
polynomial in $\alpha_1,\ldots,\alpha_n$.
Denote by
$$\QQ[\alpha]^{S_n}\equiv\QQ[\alpha_1,\ldots,\alpha_n]^{S_n}\subset\QQ [\alpha_1,\ldots,\alpha_n]$$
the subspace of symmetric polynomials, by $\mathcal{J}\subset\QQ[\alpha]^{S_n}$
the ideal generated by $\sigma_1,\ldots,\sigma_{n-1}$, and~by
$$\tilde\QQ[\alpha]^{S_n} \equiv
\QQ[\alpha_1,\ldots,\alpha_n]_{<(\alpha_j-\alpha_k)|j\neq k>}^{S_n} \subset \QQ_{\alpha}$$
the subalgebra of symmetric rational functions in $\alpha_1,\ldots,\alpha_n$
whose denominators are products of $(\alpha_j\!-\!\alpha_k)$ with $j\!\neq\!k$.
For each $i\!=\!1,\ldots,n$, let
$$\tilde\QQ_i[\alpha]^{S_{n-1}} \equiv
\QQ[\alpha_1,\ldots,\alpha_n]_{<(\alpha_i-\alpha_k)|k\neq i>}^{S_{n-1}}
\subset \QQ_{\alpha}$$
be the subalgebra consisting of rational functions symmetric in $\{\alpha_k\!:k\!\neq\!i\}$
and with denominators that are  products of
$(\alpha_i\!-\!\alpha_k)$ with $k\!\neq\!i$.

\begin{lemm}\label{good0}
Let $f(z,\alpha)\in \mathcal{J}\QQ[\alpha][z]$. Then
for $\frac{f(\alpha_j,\alpha)}{\prod_{k\neq j}(\alpha_j-\alpha_k)^m}$ with $m\ge0$, we have
\begin{eqnarray}
&&\bigg(\sum_{j=1}^n\frac{f(\alpha_j, \alpha)}{\prod_{k\neq j}(\alpha_j-\alpha_k)^{m+1}}\bigg)\bigg|_{\alpha=0}=0.
\end{eqnarray}
\end{lemm}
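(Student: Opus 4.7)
The plan is to exploit the ideal structure of $\mathcal{J}$. Since $\mathcal{J}\subset\QQ[\alpha]^{S_n}$ is generated by $\sigma_1,\ldots,\sigma_{n-1}$, every $f\in\mathcal{J}\QQ[\alpha][z]$ admits a decomposition
$$
f(z,\alpha)=\sum_{p=1}^{n-1}\sigma_p(\alpha)\,h_p(z,\alpha),\qquad h_p\in\QQ[\alpha][z].
$$
The sum appearing in the statement is $\QQ[\alpha][z]$-linear in $f$, so by linearity it is enough to establish the vanishing for a single monomial $f(z,\alpha)=\sigma_p(\alpha)h(z,\alpha)$ with $1\le p\le n-1$.

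Because $\sigma_p(\alpha)$ is independent of the summation index $j$, I pull it outside the sum:
$$
\sum_{j=1}^n\frac{\sigma_p(\alpha)\,h(\alpha_j,\alpha)}{\prod_{k\neq j}(\alpha_j-\alpha_k)^{m+1}}\;=\;\sigma_p(\alpha)\cdot S_h(\alpha),
$$
where $S_h(\alpha):=\sum_{j=1}^n\frac{h(\alpha_j,\alpha)}{\prod_{k\neq j}(\alpha_j-\alpha_k)^{m+1}}\in\tilde\QQ[\alpha]^{S_n}$. The task reduces to showing that the product $\sigma_p\cdot S_h$ is regular at $\alpha=0$ with value $0$ there.

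For the base case $m=0$, the inner sum $S_h(\alpha)$ is in fact polynomial in $\alpha$: one way to see this is to apply the residue theorem on $\PP^1$ to the rational form $\frac{h(z,\alpha)}{\prod_{k=1}^n(z-\alpha_k)}$, which converts $\sum_j\fR_{z=\alpha_j}$ into $-\fR_{z=\infty}$, a polynomial extracted from the top coefficients of $h$ in $z$. Therefore $\sigma_p(\alpha)\cdot S_h(\alpha)$ is a genuine polynomial in $\alpha$, and it vanishes at $\alpha=0$ because $\sigma_p(0)=0$ for every $p\ge1$.

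For general $m\ge1$, I would reduce to the case $m=0$ by means of the differentiation identity
$$
\frac{1}{a^{m+1}}=\frac{(-1)^m}{m!}\,\frac{\partial^m}{\partial u^m}\bigg|_{u=0}\frac{1}{a-u},
$$
applied to each factor $(\alpha_j-\alpha_k)$ in the denominator, together with an induction on $m$; this expresses $S_h$ as an $m$-th partial derivative of a polynomial expression involving auxiliary perturbation parameters, whose vanishing together with $\sigma_p$ yields the claim. The main obstacle is precisely this $m\ge1$ case: the symmetric rational function $S_h(\alpha)$ now has honest poles along the diagonals $\alpha_i=\alpha_j$, all of which pass through the origin, and one must verify that the extra vanishing supplied by $\sigma_p(\alpha)$ genuinely cancels these poles rather than producing a $0\cdot\infty$ indeterminacy. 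Tracking this cancellation along the discriminant locus, using that $\sigma_p$ vanishes to order at least $p\ge1$ at $\alpha=0$ while the singular part of $S_h$ is controlled by the explicit divided-difference form coming from the residue reformulation, is the technical crux of the proof.
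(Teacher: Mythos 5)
Your $m=0$ argument is correct and is essentially the paper's: convert the sum into $\sum_j\fR_{z=\alpha_j}$ of $f(z,\alpha)/\prod_k(z-\alpha_k)$, use the residue theorem to replace it by $-\fR_{z=\infty}$, and observe that this residue is a polynomial in $\alpha$ whose coefficients lie in the ideal generated by $\sigma_1,\dots,\sigma_{n-1}$, hence vanishes at $\alpha=0$; pulling the factor $\sigma_p$ out front is only a cosmetic variant of this. The problem is the case $m\ge1$, which is the one actually needed in Lemma \ref{good} and which you do not prove: you outline a reduction via $\frac{1}{a^{m+1}}=\frac{(-1)^m}{m!}\partial_u^m|_{u=0}\frac{1}{a-u}$ and an induction, and then explicitly concede that the cancellation of the diagonal poles of $S_h(\alpha)$ against the vanishing of $\sigma_p$ at the origin is ``the technical crux'' left unverified. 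That concession is the gap, and the worry behind it is real: for $n=2$, $m=1$, $f=\sigma_1$ the sum is $2\sigma_1/(\alpha_1-\alpha_2)^2$, which restricted to $\alpha_2=0$ equals $2/\alpha_1$ and has no limit at $\alpha=0$. So the mechanism ``polynomial times a factor vanishing at $0$'' cannot be rescued merely by extracting $\sigma_p$; after your reduction to a single term $\sigma_p h$ the statement is no longer within reach of that argument, and any complete proof must exploit more structure than $\sigma_p(0)=0$.

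The paper's treatment of $m\ge1$ is genuinely different from your plan and avoids ever confronting the diagonal poles of the individual terms: it inserts the symmetrizing factor $\sum_{i=1}^n(z-\alpha_i)^m$ into the numerator, writes the quantity as $\sum_j\fR_{z=\alpha_j}\bigl\{\sum_i\frac{(z-\alpha_i)^m f(z,\alpha)}{\prod_k(z-\alpha_k)^{m+1}}\bigr\}$, and again applies the residue theorem so that only $-\fR_{z=\infty}$ of a single rational function remains; that residue is once more a polynomial in $\alpha$ with coefficients in $\mathcal{J}$, hence zero at $\alpha=0$. Nothing in your differentiation-in-auxiliary-parameters scheme reproduces this step, and as written your proposal establishes only the $m=0$ case. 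To repair it you would either have to carry out the paper's residue-at-infinity rewriting for $m\ge1$, or make explicit (and use) the regularity of the full sums as they arise in Lemma \ref{good}, rather than relying on the vanishing of $\sigma_p$ alone.
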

\begin{proof}

By the residue theorem on $S^2$,
when $m=0$,
\begin{eqnarray*}
&&\bigg(\sum_{j=1}^n\frac{f(\alpha_j,\alpha)}{\prod_{k\neq j}(\alpha_j-\alpha_k)}\bigg)\bigg|_{\alpha=0}\\
&=&\bigg(\sum_{j=1}^n\fR_{z=\alpha_j}
\bigg\{\frac{  f(z,\alpha)}{\prod_{k=1}^n(z-\alpha_k)}\bigg\}\bigg)\bigg|_{\alpha=0}\nonumber\\
&=&-\bigg(\fR_{\infty}
\bigg\{\frac{ f(z,\alpha)}{\prod_{k=1}^n(z-\alpha_k)}\bigg\}\bigg)\bigg|_{\alpha=0}\nonumber\\
&=&0.\nonumber
\end{eqnarray*}

When $m\ge1$,
\begin{eqnarray*}
&&\bigg(\sum_{j=1}^n\frac{f(\alpha_j,\alpha)}{\prod_{k\neq j}(\alpha_j-\alpha_k)^{m+1}}\bigg)\bigg|_{\alpha=0}\\
&=&\bigg(\sum_{j=1}^n\fR_{z=\alpha_j}
\bigg\{\sum_{i=1}^n\frac{ (z-\alpha_i)^m f(z,\alpha)}{\prod_{k=1}^n(z-\alpha_k)^{m+1}}\bigg\}\bigg)\bigg|_{\alpha=0}\nonumber\\
&=&-\bigg(\fR_{\infty}
\bigg\{\sum_{i=1}^n\frac{ (z-\alpha_i)^mf(z,\alpha)}{\prod_{k=1}^n(z-\alpha_k)^{m+1}}\bigg\}\bigg)\bigg|_{\alpha=0}\nonumber\\
&=&0.\nonumber
\end{eqnarray*}
\end{proof}

%

\begin{lemm}\label{good'0}
Let $d,m\in\ZZ^{\ge0}$ satisfy, $ m\neq (n-1)(d+1)$. Then

\begin{eqnarray}
&&\bigg(\sum_{j=1}^n\frac{\alpha_j^{m}}{\prod_{k\neq j}(\alpha_j-\alpha_k)^{d+1}}\bigg)\bigg|_{\alpha=0}=0.
\end{eqnarray}
\end{lemm}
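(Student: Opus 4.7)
The plan is to mimic the residue-theorem argument used in the proof of Lemma~\ref{good0}. For each $j \in \mathbf{n}$, consider the rational function
$$H_j(z) = \frac{z^m(z-\alpha_j)^d}{\prod_{k=1}^n(z-\alpha_k)^{d+1}}$$
on $\PP^1$. The factor $(z-\alpha_j)^d$ reduces the pole of order $d+1$ at $z=\alpha_j$ to a simple pole, with residue
$$\fR_{z=\alpha_j}H_j(z) = \frac{\alpha_j^m}{\prod_{k\neq j}(\alpha_j-\alpha_k)^{d+1}},$$
i.e.\ precisely the $j$-th summand in our expression. By the residue theorem on $S^2$ applied to $H_j$ and summation over $j$,
$$\sum_{j=1}^n\frac{\alpha_j^m}{\prod_{k\neq j}(\alpha_j-\alpha_k)^{d+1}} = -\sum_j\fR_{z=\infty}H_j(z) - \sum_j\sum_{k\neq j}\fR_{z=\alpha_k}H_j(z).$$

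To evaluate the right-hand side at $\alpha=0$ I would first treat the residue at infinity. The combined sum $\sum_j H_j = z^m\sum_j(z-\alpha_j)^d/\prod_k(z-\alpha_k)^{d+1}$ has leading Laurent behavior $n\, z^{m-(n-1)(d+1)-1}$ at $z=\infty$, so the coefficient of $z^{-1}$ in its Laurent expansion at infinity is either identically zero (when $m<(n-1)(d+1)$, because the expansion never reaches the power $z^{-1}$) or a polynomial in $\alpha_1,\ldots,\alpha_n$ of positive homogeneous degree $m-(n-1)(d+1)$ (when $m>(n-1)(d+1)$). In either branch of the hypothesis $m\neq (n-1)(d+1)$, this residue vanishes at $\alpha=0$.

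For the cross-residue contributions $\sum_j\sum_{k\neq j}\fR_{z=\alpha_k}H_j$, each term is a $d$-th derivative at a pole of order $d+1$, and is itself a rational function in $\alpha$ homogeneous of total degree $m-(n-1)(d+1)$. The main obstacle will be to show these collectively vanish at $\alpha=0$; I would address this by iterating the same residue-theorem manipulation (grouping $H_j$ with auxiliary factors $(z-\alpha_k)^d$) to express the cross contributions again as residues at infinity of rational functions whose leading Laurent behavior at $z=\infty$ is controlled by exactly the same exponent $m-(n-1)(d+1)-1$, so the argument of the preceding paragraph applies verbatim.
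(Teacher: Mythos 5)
Your first step and your treatment of the residue at infinity are sound and coincide with the paper's: the sum $\sum_j H_j$ behaves like $n\,z^{m-(n-1)(d+1)-1}$ at $z=\infty$, so under the hypothesis $m\neq(n-1)(d+1)$ its residue there is either identically zero or a homogeneous polynomial in $\alpha$ of positive degree, hence vanishes at $\alpha=0$. The structural difference is that the paper symmetrizes \emph{before} applying the residue theorem: it works with the single function $\sum_{i}(z-\alpha_i)^d z^m/\prod_{k}(z-\alpha_k)^{d+1}=\sum_i H_i(z)$ and sums its residues over all finite poles $z=\alpha_j$ at once, so that only the residue at infinity survives and the off-diagonal residues are never isolated. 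You instead apply the residue theorem to each $H_j$ separately; the diagonal residues $\fR_{z=\alpha_j}H_j$ do reproduce the lemma's sum (you are right that this identification is the delicate point), but you are then left with the cross contribution $\sum_j\sum_{k\neq j}\fR_{z=\alpha_k}H_j(z)$, and this is where your argument has a genuine gap.

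The final paragraph does not close that gap. The proposed ``iteration of the same manipulation'' is circular: applying the residue theorem to each $H_j$ gives exactly $\sum_j\sum_{k\neq j}\fR_{z=\alpha_k}H_j=-\sum_j\fR_{z=\alpha_j}H_j-\sum_j\fR_{z=\infty}H_j$, so, modulo the already-controlled residue at infinity, proving that the cross terms vanish at $\alpha=0$ is \emph{equivalent} to the lemma itself; no new rational function is produced to which the first-paragraph argument could be applied ``verbatim.'' Worse, the asserted collective vanishing is false as stated: take $n=2$, $d=1$, $m=0$ (allowed, since $(n-1)(d+1)=2\neq 0$); then $\fR_{z=\alpha_2}H_1+\fR_{z=\alpha_1}H_2=-2/(\alpha_1-\alpha_2)^2$, which is singular at $\alpha=0$, not zero. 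Homogeneity of degree $m-(n-1)(d+1)$ forces vanishing at $\alpha=0$ only when regularity at $\alpha=0$ is known, and the separate diagonal and cross pieces are precisely not regular there; only combinations of the kind the lemma is applied to in Lemma~\ref{good} (or the symmetrized residue at infinity the paper manipulates) are. So the term-by-term decomposition cannot be completed along the lines you indicate; to prove the statement in the form the paper uses it, you need to work with the symmetrized sum of residues of $\sum_i H_i$, as in the paper's proof and in the $d=0$ case of Lemma~\ref{good0}.
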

\begin{proof}

By the residue theorem on $S^2$, when $d=0$
\begin{eqnarray*}
&&\bigg(\sum_{j=1}^n\frac{\alpha_j^{m}}{\prod_{k\neq j}(\alpha_j-\alpha_k)}\bigg)\bigg|_{\alpha=0}\\
&=&\bigg(\sum_{j=1}^n\fR_{z=\alpha_j}
\bigg\{\frac{ z^{m}}{\prod_{k=1}^n(z-\alpha_k)}\bigg\}\bigg)\bigg|_{\alpha=0}\nonumber\\
&=&-\bigg(\fR_{\infty}
\bigg\{\frac{ z^{m}}{\prod_{k=1}^n(z-\alpha_k)}\bigg\}\bigg)\bigg|_{\alpha=0}\nonumber\\
&=&0.\nonumber
\end{eqnarray*}
When $d\ge 1$,
\begin{eqnarray*}
&&\bigg(\sum_{j=1}^n\frac{\alpha_j^{m}}{\prod_{k\neq j}(\alpha_j-\alpha_k)^{d+1}}\bigg)\bigg|_{\alpha=0}\\
&=&\bigg(\sum_{j=1}^n\fR_{z=\alpha_j}
\bigg\{\sum_{i=1}^n\frac{  (z-\alpha_i)^dz^m}{\prod_{k=1}^n(z-\alpha_k)^{d+1}}\bigg\}\bigg)\bigg|_{\alpha=0}\nonumber\\
&=&-\bigg(\fR_{\infty}
\bigg\{\sum_{i=1}^n\frac{  (z-\alpha_i)^dz^m}{\prod_{k=1}^n(z-\alpha_k)^{d+1}}\bigg\}\bigg)\bigg|_{\alpha=0}\nonumber\\
&=&0.\nonumber
\end{eqnarray*}
\end{proof}

Let
\beq F(w,q)\equiv\sum_{d=0}^{\infty}q^d \,
\frac{\prod_{k=1}^{\ell}\prod\limits_{r=1}^{a_kd}(a_kw\!+\!r)}
{\prod\limits_{r=1}^{d}(w\!+\!r)^n} \in \QQ(w)\big[\big[q\big]\big],\eeq

\beq\dot{F}(w,q)\equiv\sum_{d=0}^{\infty}q^d \,
\frac{\prod_{k=1}^{\ell}\prod\limits_{r=1}^{a_kd}(a_kw\!+\!r)}
{\prod\limits_{r=1}^{d}\big((w\!+\!r)^n-w^n\big)} \in \QQ(w)\big[\big[q\big]\big],\eeq

 and
\beq\ddot{F}(w,q)\equiv\sum_{d=0}^{\infty}q^d \,
\frac{\prod_{k=1}^{\ell}\prod\limits_{r=0}^{a_kd-1}(a_kw\!+\!r)}
{\prod\limits_{r=1}^{d}\big((w\!+\!r)^n-w^n\big)} \in \QQ(w)\big[\big[q\big]\big].\eeq

These are power series in $q$ with constant term 1 whose coefficients are rational functions
in $w$ which are regular at~$w=0$.
We denote the subgroup of all such power series by~$\cP$
and define
\beq
\begin{aligned}
&\bD\!:\QQ(w)\big[\big[q\big]\big]\lra \QQ(w)\big[\big[q\big]\big],
&\quad& \bM:\cP\lra\cP  \qquad\hbox{by}\\
&\bD H(w,q)\equiv \left\{1+\frac{q}{w}\frac{d}{d q}\right\}H(w,q),
&\quad&
\bM H(w,q)\equiv\bD\bigg(\frac{H(w,q)}{H(0,q)}\bigg).
\end{aligned}\eeq
For $s\in\ZZ^{\ge0}$, let
\beq \dot{I}_s(q)\equiv \bM^s\dot{F}(0,q), \qquad
\ddot{I}_s(q)\equiv \bM^s\ddot{F}(0,q).\eeq
By \cite[(4.14)]{Popa}, for $p\ge0$, $$
\bM^{p}\dot{F}(w,q)=\bM^{p+\ell}\ddot{F}(w,q).
$$

Let
$$\fD^0\check{\cY}(\mathbf{x},\hb,q)=\frac{\check{\cY}(\mathbf{x},\hb,q)}{\check{I}_0(q)}, \quad
\fD^s\check{\cY}(\mathbf{x},\hb,q)=
\frac{1}{\check{I}_s(q)}
\left\{\mathbf{x}+\hb\, q\frac{d}{d q}\right\}\fD^{s-1}\check{\cY}(\mathbf{x},\hb,q)$$
for all $s\!\in\!\ZZ^+$ and
$(\check{\cY},\check{I})\!=\!(\cY,I), (\dot\cY,\dot{I}), (\ddot\cY,\ddot{I})$.
For $r,s,s'\ge 0$, there exists $\cC_{s,s'}^{(r)}\in\QQ[\alpha_1,\cdots,\alpha_n][[q]]$, such that
\beq\label{good3}
\hbar^s\sum_{s'=0}^{\infty}\sum_{r=0}^{s'}\cC_{s,s'}^{(r)}(q)\mathbf{x}^{s'-r}\hbar^{-s'}
=\fD^s\cY(\mathbf{x},\hbar,q).\eeq
\begin{gather}\label{good4}
\fD^s\check{\cY}(\mathbf{x},\hbar,q)\big|_{\alpha=0}=
\mathbf{x}^s\fD^s\check{F}(\mathbf{x}/\hbar,q),\qquad\hbox{where}\\
\fD^0\check{F}(w,q)=\frac{\check{F}(w,q)}{\check{I}_0(q)}\,,\quad
\fD^s\check{F}(w,q)=\frac{1}{\check{I}_s(q)}\left\{1+\frac{q}{w}\frac{d}{d q}\right\}
\fD^{s-1}\check{F}(w,q) \quad\forall\,s\!\in\!\ZZ^+,
\end{gather}
with $(\check{\cY},\check{F},\check{I})\!=\!(\cY, F,I), (\dot\cY,\dot F,\dot{I}),(\ddot\cY,\ddot F,\ddot{I})$.
\begin{theo}\cite[Theorem 4]{Zin2}\label{main1}If $\ell\!\in\!\ZZ^{\ge0}$, $n\!\in\!\ZZ^+$, and $\a\!\in\!(\ZZ^{>0})^{\ell}$ are such that
$|\a|=\sum_{i=1}^{\ell}\!=\!n$, then
\beq\label{main_1}
\dot{\cZ}(\hbar_1,\hbar_2,q) =\frac{1}{\hbar_1+\hbar_2}\sum_{s_1,s_2,r\ge0\atop s_1+s_2+r=n-1}(-1)^r\sigma_r\dot{\cZ}^{(s_1)}(\mathbf{x},\hbar_1,q)\ddot{\cZ}^{(s_2)}(\mathbf{x},\hbar_2,q),
\eeq
where $\sigma_r\in\QQ_{\alpha}$ is the $r$-th elementary symmetric polynomial in $\alpha_1,\cdots,\alpha_n$. There exists $\widetilde{\cC}_{s_1,s_2}^{(r)}\in\QQ[\alpha][[q]]$ such that
$$
\check{\cZ}^{(s)}(\mathbf{x},\hb,q)=\check{\cY}^{(s)}(\mathbf{x},\hb,q):=\sum_{r=0}^s\sum_{s'=0}^{s-r}
\widetilde{\cC}_{s-\ell^*(\a),s'-\ell^*(\a)}^{(r)}(q)\,\hb^{s-r-s'}\fD^{s'}\check{\cY}(\mathbf{x},\hb,q),
$$
where $(\check{\cZ},\ell^*)=(\dot\cZ,0),(\ddot\cZ,\ell)$, $\check{\cY}=\dot\cZ,\ddot\cZ$.
\end{theo}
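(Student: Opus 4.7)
The proof naturally splits into two parts: (a) the reconstruction formula (\ref{main_1}) expressing the 2-point function as a bilinear combination of 1-point functions via the equivariant diagonal decomposition, and (b) the reduction of $\check\cZ^{(s)}$ to the hypergeometric basis $\{\hbar^{s-r-s'}\fD^{s'}\check\cY\}$ with pure $q$-series coefficients.

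For part (a), I would first compute the equivariant K\"unneth decomposition of $[\overline\Delta]$. The ring $H^*_\TT(\PP^{n-1})=\QQ[\alpha][\mathbf{x}]/\bigl(\prod_i(\mathbf{x}-\alpha_i)\bigr)$ is Frobenius under $\int_{\PP^{n-1}}$, and using the residue identity $\int_{\PP^{n-1}}\mathbf{x}^k=h_{k-n+1}(\alpha)$ for $k\ge n-1$ together with the inverse relation $\sum_r(-1)^r\sigma_r h_{k-r}=\delta_{k,0}$ (coming from $E(-t)H(t)=1$), one obtains
\[
[\overline\Delta]=\sum_{s_1+s_2+r=n-1}(-1)^r\sigma_r\,\mathbf{x}_1^{s_1}\mathbf{x}_2^{s_2}.
\]
Substituting this into the $d=0$ piece of $\dot\cZ(\hbar_1,\hbar_2,q)$ and applying a WDVV-type splitting to the $d\ge1$ pieces reduces the 2-point correlator to a bilinear form in 1-point correlators. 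The delicate point is that the asymmetric twist $-\sigma_1$ in $\dot\cV^{(d)}_{n;\mathbf{a}}$ splits as a $-\sigma_1$ twist on the $ev_1$ side (producing $\dot\cZ^{(s_1)}(\hbar_1)$) and a $-\sigma_2$ twist on the $ev_2$ side (producing $\ddot\cZ^{(s_2)}(\hbar_2)$). I would verify the resulting identity equivariantly by restricting to each fixed pair $p_i\times p_j$ using (\ref{zero}) and matching against the residue expression (\ref{Z2ptdfn_e}).

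For part (b), I would induct on $s$. The base case $s=0$ is Theorem \ref{thm1'}, giving $\check\cZ^{(0)}=\check\cY/\check I_0=\fD^0\check\cY$. For the inductive step, I would establish a recursion
\[
\check\cZ^{(s)}(\mathbf{x},\hbar,q)=\frac{1}{\check I_s(q)}\left(\mathbf{x}+\hbar\, q\frac{d}{dq}\right)\check\cZ^{(s-1)}(\mathbf{x},\hbar,q)+(\text{lower order in }\fD^{\bullet}\check\cY),
\]
mirroring the defining recursion for $\fD^s\check\cY$; the lower-order correction terms are the source of the coefficients $\widetilde\cC^{(r)}_{s-\ell^*(\mathbf{a}),s'-\ell^*(\mathbf{a})}(q)$. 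The index shift by $\ell^*(\mathbf{a})\in\{0,\ell\}$ accounts for the discrepancy $\ddot\cY_d/\dot\cY_d=(\mathbf{x}/(\mathbf{x}+d\hbar))^{\ell}$, which shifts the effective differentiation level between the two families by $\ell$; the asymptotic expansion (\ref{good3}) of $\fD^s\cY$ supplies the basis in which the $\widetilde\cC^{(r)}$ are read off uniquely.

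The main obstacle is the twist-exchange argument in part (a): proving rigorously that the pushforward of $\hat\cS^{\vee\otimes a_k}(-\sigma_1)$ descends as $\dot\cV$ on the $ev_1$ side and as $\ddot\cV$ on the $ev_2$ side under reconstruction. Concretely, one degenerates the domain curve so that $\sigma_1$ and $\sigma_2$ lie on different components; the twist by $-\sigma_1$ persists on the $\sigma_1$-component but, on the $\sigma_2$-component, is replaced by a twist at the gluing node which, after relabeling, becomes $-\sigma_2$. Making this behavior of the twisted universal bundle across the splitting strata of the quasimap moduli precise is the technical heart of the argument.
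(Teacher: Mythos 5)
First, note that the paper does not prove this statement at all: it is quoted verbatim as \cite[Theorem 4]{Zin2} and used as a black box, so there is no internal proof to compare your argument against. Judged on its own merits, your sketch has a genuine gap at its core, namely the ``WDVV-type splitting'' in part (a). With only two marked points there is no forgetful map to $\overline{\cM}_{0,4}$ and no boundary divisor relation to invoke, and nothing forces the domain curve to degenerate so that $\sigma_1$ and $\sigma_2$ lie on separate components; the moduli space $\overline{Q}_{0,2}(\PP^{n-1},d)$ integrates over all configurations, so the two-point class does not geometrically factor into a product of one-point classes weighted by the diagonal. Your K\"unneth formula for $[\overline\Delta]$ is correct, but it only explains the $d=0$ term of (\ref{main_1}); the identity for $d\ge1$ is not a splitting statement and cannot be obtained by the degeneration heuristic you describe (this is also why you flag the twist-exchange $\dot\cV\leftrightarrow\ddot\cV$ as the ``technical heart'' --- it is, and your proposal gives no mechanism for it). The actual proof in \cite{Zin2} is Givental-style: torus localization on $\overline{Q}_{0,2}(\PP^{n-1},d)$ produces a residue recursion (``$\fC$-recursivity'') for both sides, localization on a graph-space-type moduli of stable quotients yields a mutual polynomiality condition, and a uniqueness lemma for power series satisfying both properties with fixed initial data forces the equality (\ref{main_1}); the asymmetric appearance of $\dot\cZ$ and $\ddot\cZ$ is dictated by which marked point carries the $-\sigma$ twist in the recursion coefficients $\dot{\fC}$, $\ddot{\fC}$, not by a geometric degeneration.

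Part (b) has a similar status: the base case via Theorem \ref{thm1'} is fine, but the inductive step is asserted rather than derived --- you posit a recursion for $\check\cZ^{(s)}$ ``mirroring'' the defining recursion of $\fD^s\check\cY$ with unspecified lower-order corrections, which is essentially a restatement of the conclusion. In \cite{Zin2} the existence of the coefficients $\widetilde\cC^{(r)}_{s_1,s_2}\in\QQ[\alpha][[q]]$ comes from showing that $\check\cZ^{(s)}$ is an admissible transform of the hypergeometric series (again via recursivity and polynomiality), and then using the triangular structure of the family $\{\hbar^{s-r-s'}\fD^{s'}\check\cY\}$, including the identity $\bM^{p}\dot F=\bM^{p+\ell}\ddot F$ responsible for the shift by $\ell^*(\a)$, to solve for the coefficients. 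So while your outline identifies the right ingredients (diagonal class, Theorem \ref{thm1'}, the $\fD$-operators, the $\ell$-shift), the two load-bearing steps are exactly the ones left unproved, and the first of them would fail as stated.
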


\begin{lemm}\label{good}

\begin{eqnarray}
&&\bigg(\sum_{i=1}^n(\phi_{i}|_{p_{i}})^{-1}\bigg(q\frac{d}{dq}\cF_{n}^{(0,0)}(\alpha_i,q)+\alpha_i\bigg)\nonumber\\
&&\fR_{\hb_1=0}\left\{\fR_{\hb_2=0}
\left\{\mbox{e}^{-\cF_{n}^{(0,0)}(\alpha_i,q)(\frac{1}{\hb_1}+\frac{1}{\hb_2})}\widetilde{\cZ}_{ii}^*(\hb_1,\hb_2,q)
 \right\}\right\}\bigg)\bigg|_{\alpha=0}\nonumber\\
 &=&\bigg(\sum_{i=1}^n(\phi_{i}|_{p_{i}})^{-1}\bigg(q\frac{d}{dq}\cF_{n}^{(0,0)}(\alpha_i,q)+\alpha_i\bigg)\alpha_i^{n-1}
 \fR_{\hb_1=0}\bigg\{\fR_{\hb_2=0}\nonumber\\
&&\frac{1}{2(\hbar_1+\hbar_2)\hbar_1\hbar_2}
\bigg\{\mbox{e}^{-\cF_{n}^{(0,0)}(\alpha_i,q)(\frac{1}{\hb_1}+\frac{1}{\hb_2})}
\FF(\alpha_i/\hbar_1,\alpha_i/\hbar_2,q)\bigg\}\bigg\}\bigg)\bigg|_{\alpha=0},\nonumber
\end{eqnarray}

where
\begin{equation*}\begin{split}
\FF(w_1,w_2,q) &=\sum_{p=0}^{n-1-\ell}\frac{\bM^p\dot{F}(w_1,q)}{\dot I_p(q)}
\frac{\bM^{n-1-p}\ddot{F}(w_2,q)}{\ddot{I}_{n-1-p}(q)}\\
&\qquad +\sum_{p=1}^{\ell}\frac{\bM^{n-1+p}\ddot{F}(w_1,q)}{\ddot{I}_{n-1+p}(q)}
\frac{\bM^{n-p}\dot{F}(w_2,q)}{\dot{I}_{n-p}(q)}.
\end{split}\end{equation*}
\end{lemm}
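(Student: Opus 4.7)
The plan is to insert the two formulas of Theorem~\ref{main1} into the definition of $\widetilde\cZ^*_{ii}$ and then show, by applying Lemmas~\ref{good0} and~\ref{good'0}, that the only contributions that survive the $\alpha\!\to\!0$ evaluation assemble precisely into $\FF$.

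First, starting from $\widetilde\cZ^*_{ii}(\hb_1,\hb_2,q) = \frac{1}{2\hb_1\hb_2}\bigl(\dot\cZ(\hb_1,\hb_2,q) - \frac{[\overline\Delta]}{\hb_1+\hb_2}\bigr)\big|_{p_i\times p_i}$, I would apply the first identity of Theorem~\ref{main1} to expand $\dot\cZ(\hb_1,\hb_2,q)|_{p_i\times p_i}$ as $\frac{1}{\hb_1+\hb_2}\sum_{s_1+s_2+r=n-1}(-1)^r\sigma_r\dot\cZ^{(s_1)}(\alpha_i,\hb_1,q)\ddot\cZ^{(s_2)}(\alpha_i,\hb_2,q)$, noting that $[\overline\Delta]|_{p_i\times p_i}=\phi_i|_{p_i}$ eats the $(r,s_1,s_2)=(n-1,0,0)$ piece. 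I then apply the second identity of Theorem~\ref{main1} to rewrite each $\dot\cZ^{(s_1)},\ddot\cZ^{(s_2)}$ as a $\sum_{r,s'}\widetilde\cC^{(r)}(q)\hb^{s-r-s'}\fD^{s'}\check\cY$, and use (\ref{good4}) to substitute $\fD^{s'}\check\cY(\alpha_i,\hb,q)|_{\alpha=0}=\alpha_i^{s'}\fD^{s'}\check F(\alpha_i/\hb,q)$.

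Second, I would show that only the $r=0$ terms from both the outer and inner expansions survive after summing over $i$ against $(\phi_i|_{p_i})^{-1}$ and setting $\alpha=0$. The outer $\sigma_r$ vanishes at $\alpha=0$ for $r\ge 1$. For the inner coefficients, comparing both sides of (\ref{good3}) at $\alpha=0$ shows that $\widetilde\cC^{(r)}_{s,s'}(q)$ with $r\ge 1$ is a symmetric polynomial in $\alpha$ lying in $\mathcal{J}\QQ[\alpha][[q]]$, so Lemma~\ref{good0} annihilates these pieces. For the surviving $r=0$ contributions, the combined $\alpha_i$-power extracted by the double residue at $\hb_1=\hb_2=0$ is $\alpha_i^{s_1+s_2}$ multiplied by rational functions of $\alpha_i/\hb_j$ coming from the $\fD^{s_j}\check F$ factors; Lemma~\ref{good'0} with $d=0$ forces $s_1+s_2=n-1$ as the unique surviving exponent, producing the explicit $\alpha_i^{n-1}$ prefactor in the target formula.

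Finally, I would unwind $\fD^s\check F$ into $\bM^s\check F/\check I_s$ via the recursive definition of $\fD^{s}$ and invoke the identity $\bM^p\dot F = \bM^{p+\ell}\ddot F$ from \cite[(4.14)]{Popa} to repackage the $s_1+s_2=n-1$ sum into the two ranges $0\le p\le n-1-\ell$ and $1\le p\le \ell$ that define $\FF$. The main obstacle is the algebraic verification that $\widetilde\cC^{(r)}_{s,s'}\in\mathcal{J}\QQ[\alpha][[q]]$ for $r\ge 1$: this requires unpacking the recursive definition of $\fD^s\cY$ and tracking ideal-membership through the coefficient comparison in (\ref{good3}), after which the remaining work is combinatorial bookkeeping driven by the two residue lemmas.
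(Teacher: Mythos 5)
Your overall route is the paper's own: restrict the two identities of Theorem~\ref{main1} to $p_{i}\times p_{i}$, discard everything that Lemmas~\ref{good0} and~\ref{good'0} annihilate after the weighted sum over $i$, and reassemble the surviving $r=0$, $s_1+s_2=n-1$ terms into $\FF$ via $\fD^{s}\check F=\bM^{s}\check F/\check I_{s}$ and $\bM^{p}\dot F=\bM^{p+\ell}\ddot F$. But two steps are misstated as written. First, $[\overline{\Delta}]|_{p_{i}\times p_{i}}=\phi_{i}|_{p_{i}}=\prod_{k\neq i}(\alpha_i-\alpha_k)$ does not cancel the single $(r,s_1,s_2)=(n-1,0,0)$ term $(-1)^{n-1}\sigma_{n-1}$; it equals the \emph{entire} $q^0$ coefficient $\sum_{s_1+s_2+r=n-1}(-1)^r\sigma_r\alpha_i^{s_1+s_2}$, which is the derivative of $\prod_{k}(z-\alpha_k)$ at $z=\alpha_i$ — this is exactly how the first displayed identity in the paper's proof arises. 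Your cancellation leaves an extra $q$- and $\hbar$-independent constant inside $\widetilde{\cZ}_{ii}^*$; it happens to be harmless because $\fR_{\hb_1=0}\fR_{\hb_2=0}\bigl\{e^{-\cF_{n}^{(0,0)}(\alpha_i,q)(1/\hb_1+1/\hb_2)}/\bigl(2\hb_1\hb_2(\hb_1+\hb_2)\bigr)\bigr\}=0$, but you must either use the correct identity or prove that vanishing; as stated the step is false.

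Second, ``the outer $\sigma_r$ vanishes at $\alpha=0$ for $r\ge1$'' is not by itself a legitimate reason to drop those terms: every summand carries $(\phi_{i}|_{p_{i}})^{-1}$ and further $\alpha$-denominators coming from $\cF_{n}^{(0,0)}(\alpha_i,q)$ and from $\dot{\cY}^{(s_1)},\ddot{\cY}^{(s_2)}$, so the individual factors are singular at $\alpha=0$ and one cannot evaluate factorwise. The paper disposes of these terms precisely by Lemma~\ref{good0}, using $\sigma_r\in\mathcal{J}$ so that the $q$-coefficients have numerators in $\mathcal{J}\QQ[\alpha][z]$; your argument should be routed the same way. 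Finally, a smaller point: (\ref{good3}) defines the coefficients $\cC^{(r)}_{s,s'}$ of $\fD^{s}\cY$, not the $\widetilde{\cC}^{(r)}_{s,s'}$ of Theorem~\ref{main1}, so ``comparing both sides of (\ref{good3}) at $\alpha=0$'' does not by itself yield the ideal membership you invoke; and vanishing at $\alpha=0$ of a symmetric polynomial only places it in the ideal $(\sigma_1,\ldots,\sigma_n)$, not in $\mathcal{J}=(\sigma_1,\ldots,\sigma_{n-1})$. That weaker statement is in fact enough for the residue-at-infinity mechanism behind Lemma~\ref{good0}, but then you should say so explicitly, or import the structural properties of the $\widetilde{\cC}^{(r)}$ from Zinger, which is what the paper implicitly does when it asserts that the $q$-coefficients of the difference with $\alpha_i^{n-1}\FF$ are of the forms covered by Lemmas~\ref{good0} and~\ref{good'0}.
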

\begin{proof}
By Theorem \ref{main1},
\begin{eqnarray*}
&&\sum_{s_1,s_2,r\ge0\atop s_1+s_2+r=n-1}(-1)^r\sigma_r\alpha^{s_1+s_2}_i+2(\hbar_1+\hbar_2)\hbar_1\hbar_2\widetilde{\cZ}^*_{ii} \\ &&=\sum_{s_1,s_2\ge0\atop s_1+s_2=n-1}\dot{\cY}^{(s_1)}(\alpha_i,\hbar_1,q)\ddot{\cY}^{(s_2)}(\alpha_i,\hbar_2,q)\\
&&+\sum_{s_1,s_2\ge0,r>0\atop s_1+s_2+r=n-1}(-1)^r\sigma_r\dot{\cY}^{(s_1)}(\alpha_i,\hbar_1,q)\ddot{\cY}^{(s_2)}(\alpha_i,\hbar_2,q).
\end{eqnarray*}
By the definition of $\dot{\cY}^{(s_1)}(\mathbf{x},\hbar_1,q)$ and $\ddot{\cY}^{(s_2)}(\mathbf{x},\hbar_1,q)$, and (\ref{xi-for}), the $q$ coefficient of
\begin{eqnarray*}
\fR_{\hb_1=0}\bigg\{\fR_{\hb_2=0}\frac{1}{2(\hbar_1+\hbar_2)\hbar_1\hbar_2}
\bigg\{\mbox{e}^{-\cF_{n}^{(0,0)}(\alpha_i,q)(\frac{1}{\hb_1}+\frac{1}{\hb_2})}&&\\
\sigma_r\dot{\cY}^{(s_1)}(\alpha_i,\hbar_1,q)\ddot{\cY}^{(s_2)}(\alpha_i,\hbar_2,q)\bigg\}\bigg\}&&
\end{eqnarray*}
satisfies the condition in Lemma \ref{good0}, for $r>0$.
Thus

\begin{eqnarray}
&&\bigg(\sum_{i=1}^n(\phi_{i}|_{p_{i}})^{-1}\bigg(q\frac{d}{dq}\cF_{n}^{(0,0)}(\alpha_i,q)+\alpha_i\bigg)\nonumber\\
&&\fR_{\hb_1=0}\left\{\fR_{\hb_2=0}
\left\{\mbox{e}^{-\cF_{n}^{(0,0)}(\alpha_i,q)(\frac{1}{\hb_1}+\frac{1}{\hb_2})}\widetilde{\cZ}_{ii}^*(\hb_1,\hb_2,q)
 \right\}\right\}\bigg)\bigg|_{\alpha=0}\nonumber\\
 &=&\bigg(\sum_{i=1}^n(\phi_{i}|_{p_{i}})^{-1}\bigg(q\frac{d}{dq}\cF_{n}^{(0,0)}(\alpha_i,q)+\alpha_i\bigg)\nonumber\\
 &&\fR_{\hb_1=0}\bigg\{\fR_{\hb_2=0}\frac{1}{2(\hbar_1+\hbar_2)\hbar_1\hbar_2}
\bigg\{\mbox{e}^{-\cF_{n}^{(0,0)}(\alpha_i,q)(\frac{1}{\hb_1}+\frac{1}{\hb_2})}\nonumber\\
&&
\sum_{s_1,s_2\ge0\atop s_1+s_2=n-1}\dot{\cY}^{(s_1)}(\alpha_i,\hbar_1,q)\ddot{\cY}^{(s_2)}(\alpha_i,\hbar_2,q)\bigg\}\bigg\}\bigg)\bigg|_{\alpha=0}.\nonumber
\end{eqnarray}
By (\ref{good4}) and (\ref{xi-for}), the $q$ coefficient of
\begin{eqnarray*}
&&\fR_{\hb_1=0}\bigg\{\fR_{\hb_2=0}\frac{1}{2(\hbar_1+\hbar_2)\hbar_1\hbar_2}
\bigg\{\mbox{e}^{-\cF_{n}^{(0,0)}(\alpha_i,q)(\frac{1}{\hb_1}+\frac{1}{\hb_2})}\nonumber\\
&&
\bigg(\sum_{s_1,s_2\ge0\atop s_1+s_2=n-1}\dot{\cY}^{(s_1)}(\alpha_i,\hbar_1,q)\ddot{\cY}^{(s_2)}(\alpha_i,\hbar_2,q)-
\alpha_i^{n-1}\FF(\alpha_i/\hbar_1,\alpha_i/\hbar_2,q)\bigg)\bigg\}\bigg\}
\end{eqnarray*}
is of the form as in Lemma \ref{good0} and Lemma \ref{good'0}. Thus
\begin{eqnarray}
&&\bigg(\sum_{i=1}^n(\phi_{i}|_{p_{i}})^{-1}\bigg(q\frac{d}{dq}\cF_{n}^{(0,0)}(\alpha_i,q)+\alpha_i\bigg)\nonumber\\
&&\fR_{\hb_1=0}\left\{\fR_{\hb_2=0}
\left\{\mbox{e}^{-\cF_{n}^{(0,0)}(\alpha_i,q)(\frac{1}{\hb_1}+\frac{1}{\hb_2})}\widetilde{\cZ}_{ii}^*(\hb_1,\hb_2,q)
 \right\}\right\}\bigg)\bigg|_{\alpha=0}\nonumber\\
 &=&\bigg(\sum_{i=1}^n(\phi_{i}|_{p_{i}})^{-1}\bigg(q\frac{d}{dq}\cF_{n}^{(0,0)}(\alpha_i,q)+\alpha_i\bigg)\alpha_i^{n-1}
 \fR_{\hb_1=0}\bigg\{\fR_{\hb_2=0}\nonumber\\
&&
\frac{1}{2(\hbar_1+\hbar_2)\hbar_1\hbar_2}\bigg\{\mbox{e}^{-\cF_{n}^{(0,0)}(\alpha_i,q)(\frac{1}{\hb_1}+\frac{1}{\hb_2})}
\FF(\alpha_i/\hbar_1,\alpha_i/\hbar_2,q)\bigg\}\bigg\}\bigg)\bigg|_{\alpha=0}.\nonumber
\end{eqnarray}

\end{proof}
\begin{lemm}\cite[Lemma 5.4]{Popa}\label{good2}
\begin{eqnarray*}
\fR_{h_1=0}\fR_{h_2=0}\left\{
\frac{\e^{-\mu(q)\alpha_i(\hbar_1^{-1}+\hbar_2^{-1})}}{\hbar_1\hbar_2(\hbar_1\!+\!\hbar_2)}
\FF(\alpha_i/\hbar_1,\alpha_i/\hbar_2,q)\right\}&= \alpha_i^{-1}L(q)^{-1}q\frac{d}{dq}\AA(q),
\end{eqnarray*}
\end{lemm}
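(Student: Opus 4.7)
The plan is to transform the double residue at $\hbar_1=\hbar_2=0$ into a residue at $w_1=w_2=\infty$ via the substitution $w_j=\alpha_i/\hbar_j$, and then extract the coefficients using the asymptotic behavior at $w=\infty$ of the hypergeometric modifications $\bM^p\dot F$, $\bM^p\ddot F$. Under this change of variables the kernel $\hbar_1\hbar_2(\hbar_1+\hbar_2)$ becomes $\alpha_i^3(w_1+w_2)/(w_1w_2)^2$, the exponential becomes $\e^{-\mu(q)(w_1+w_2)}$, and each ratio $\bM^p\dot F(\alpha_i/\hbar_j,q)/\dot I_p(q)$ is turned into $\bM^p\dot F(w_j,q)/\dot I_p(q)$. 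The Jacobian $d\hbar_j = -\alpha_i w_j^{-2}\,dw_j$ accounts for an overall factor of $\alpha_i^{-1}$, matching the $\alpha_i^{-1}$ on the right-hand side.

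The first step is to record the $w$-expansion, as $w\to\infty$, of the normalized functions
\[
\widetilde{H}_p(w,q):=\frac{\bM^p\dot F(w,q)}{\dot I_p(q)}\quad\text{and}\quad \widetilde{K}_p(w,q):=\frac{\bM^p\ddot F(w,q)}{\ddot I_p(q)}.
\]
Iterating the defining recursion $\bM H(w,q)=\bigl(1+(q/w)\partial_q\bigr)(H(w,q)/H(0,q))$ together with the identity $q\partial_q\mu(q)=L(q)-1$, one checks that $\e^{-\mu(q)w}\widetilde{H}_p(w,q)$ and $\e^{-\mu(q)w}\widetilde{K}_p(w,q)$ admit expansions of the form $1+\sum_{k\ge1}c_{p,k}(q)w^{-k}$ in which the coefficients $c_{p,k}(q)$ are explicit polynomial expressions in $L(q)$, $q\partial_q\log\dot I_p(q)$, and their $q$-derivatives. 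The subleading coefficient $c_{p,1}(q)$, in particular, assembles from $q\partial_q\log\dot I_p(q)$ and terms proportional to $L(q)-1$.

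Next I would expand $1/(w_1+w_2)=\sum_{k\ge0}(-w_2)^k w_1^{-k-1}$ (the expansion appropriate for the double residue at infinity) and compute the double residue term by term. Each summand $\widetilde{H}_p(w_1,q)\widetilde{K}_{n-1-p}(w_2,q)$ in $\FF$ contributes, after multiplication by $\e^{-\mu(q)(w_1+w_2)}$, a finite sum of products of the $c_{p,k}(q)$; the $w_1w_2$ denominator together with $1/(w_1+w_2)$ isolates the $w_1^{-1}w_2^{-1}$ Laurent coefficient. Using the identity $\bM^p\dot F=\bM^{p+\ell}\ddot F$ quoted from Popa, the two summation ranges $0\le p\le n-1-\ell$ and $1\le p\le\ell$ collapse to a single sum indexed by $0\le p\le n-2-\ell$ with binomial weight $\binom{n-p-\ell}{2}$, matching the structure of $\AA(q)$.

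The main obstacle is the combinatorial bookkeeping of the coefficients: one must verify that, after collecting like terms, the $\log\dot I_p(q)$ contributions recombine with weights $\binom{n-p-\ell}{2}$, the $\mu(q)$ contributions (arising from differentiating $\e^{-\mu(q)(w_1+w_2)}$ and from the subleading $c_{p,1}$) yield precisely the coefficient $\tfrac{n}{24}\!\bigl(n-1-2\sum_r a_r^{-1}\bigr)$, and the $\log(1-\alpha^\alpha q)$ pieces coming from the explicit formula $L(q)=(1-\alpha^\alpha q)^{-1/n}$ assemble to $-\tfrac{3(n-1-\ell)^2+(n-2)}{24}$. The extra factor $L(q)^{-1}$ on the right-hand side appears naturally because $q\tfrac{d}{dq}$ of every summand of $\AA(q)$ produces, via $q\partial_q\mu=L-1$ and $q\partial_q\log L=(L^n-1)/n$, a term with an $L$ in the numerator that must be cancelled against the $L^{-1}$.
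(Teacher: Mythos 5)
The paper does not prove this lemma at all: it is imported verbatim as Lemma 5.4 of \cite{Popa}, so the only "proof" in the paper is the citation. Your proposal therefore attempts something the paper never does, and judged on its own terms it is an outline rather than a proof. The decisive step --- checking that after extracting the $w_1^{-1}w_2^{-1}$ coefficient the terms recombine into $\binom{n-p-\ell}{2}\log\dot I_p(q)$, the stated multiple of $\mu(q)$, the $\log(1-\a^{\a}q)$ coefficient $\frac{3(n-1-\ell)^2+(n-2)}{24}$, and the overall prefactor $L(q)^{-1}q\frac{d}{dq}$ --- is exactly the content of the lemma, and you explicitly defer it as ``combinatorial bookkeeping'' to ``be verified.'' Nothing in the proposal pins down the coefficients $c_{p,k}(q)$ beyond saying they are ``explicit polynomial expressions,'' so the identity is never actually established.

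There is also a concrete error in the one analytic ingredient you do assert. You claim $\e^{-\mu(q)w}\,\bM^p\dot F(w,q)/\dot I_p(q)$ expands at $w=\infty$ as $1+\sum_{k\ge1}c_{p,k}(q)w^{-k}$. The normalization by $\dot I_p(q)=\bM^p\dot F(0,q)$ fixes the value at $w=0$, not the asymptotics at $w=\infty$; the true leading coefficient is a nontrivial power of $L(q)$ depending on $p$ (for instance, combining Corollary \ref{expan} with the restriction $\dot\Phi^{(0)}(\alpha_i,q)|_{\alpha=0}=L(q)^{\frac{\ell+1}{2}}$ used in Section 3 shows the $p=0$ leading coefficient is $L^{(\ell+1)/2}$, not $1$). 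These leading $L$-powers are precisely the source of the $\log L$ and $\log(1-\a^{\a}q)$ contributions and of the $L(q)^{-1}$ prefactor on the right-hand side, so with the leading term forced to $1$ your bookkeeping cannot close. The change of variables $w_j=\alpha_i/\hbar_j$, the resulting $\alpha_i^{-1}$ factor, and the use of $\bM^p\dot F=\bM^{p+\ell}\ddot F$ are all fine, but to make this a proof you would need the precise expansion coefficients (as in \cite[Section 5]{Popa}, where this computation is actually carried out) and then the full verification you postponed.
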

where
\begin{eqnarray*}
\AA(q)=\frac{n}{24}(n-1-2\sum_{r=1}^{\ell}\frac{1}{a_r})\mu(q)-\frac{3(n-1-\ell)^2+(n-2)}{24}\log(1-\a^{\a}q)\\
-\sum_{p=0}^{n-2-\ell}\binom{n-p-\ell}{2}\log \dot I_p(q).
\end{eqnarray*}
By Lemma \ref{good}, Lemma \ref{good2} and (\ref{good0})
\begin{eqnarray}\label{A-va}
&&\bigg(\sum_{i=1}^n\cA_i\bigg)\bigg|_{\alpha=0}\\
&=&\bigg(\sum_{i=1}^n(\phi_{i}|_{p_{i}})^{-1}\bigg(q\frac{d}{dq}\cF_{n}^{(0,0)}(\alpha_i,q)+\alpha_i\bigg)\nonumber\\
&&\fR_{\hb_1=0}\left\{\fR_{\hb_2=0}
\left\{\mbox{e}^{-\cF_{n}^{(0,0)}(\alpha_i,q)(\frac{1}{\hb_1}+\frac{1}{\hb_2})}\widetilde{\cZ}_{ii}^*(\hb_1,\hb_2,q)
 \right\}\right\}\bigg)\bigg|_{\alpha=0}\nonumber\\
 &=&\frac{1}{2}q\frac{d}{dq}\AA(q).\nonumber
\end{eqnarray}

\begin{theo}\label{main_3}
For projective complete intersection Calabi-Yau $X$,

$$
G_{1,0}=\frac{1}{2}\AA(q)+\frac{1}{24}\bigg(\sum_{i=1}^{\ell}(\frac{n}{a_i}-\binom{n}{2})\mu(q)-\frac{n(\ell+1)}{2}\log L(q)\bigg),
$$
where
\begin{eqnarray*}
\AA(q)=\frac{n}{24}(n-1-2\sum_{r=1}^{\ell}\frac{1}{a_r})\mu(q)-\frac{3(n-1-\ell)^2+(n-2)}{24}\log(1-\a^{\a}q)\\
-\sum_{p=0}^{n-2-\ell}\binom{n-p-\ell}{2}\log \dot I_p(q).
\end{eqnarray*}
\end{theo}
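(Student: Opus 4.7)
The plan is to derive Theorem \ref{main_3} by integrating the identity supplied by Theorem \ref{The}, after substituting in the two explicit evaluations (\ref{A-va}) and (\ref{B-va}) of the $\alpha\to 0$ limits of $\sum_i\cA_i$ and $\sum_i\cB_i$. Concretely, adding these two contributions gives
\[
q\frac{d}{dq}G_{1,0}=\tfrac12\,q\frac{d}{dq}\AA(q)+\tfrac{1}{24}\,q\frac{d}{dq}\!\left(\sum_{i=1}^\ell\!\Bigl(\tfrac{n}{a_i}-\tbinom{n}{2}\Bigr)\mu(q)-\tfrac{n(\ell+1)}{2}\log L(q)\right),
\]
so the stated formula follows by integrating both sides and checking that the constants of integration match. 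Since $G_{1,0}=\sum_{d\ge 1}q^d\deg[\overline{Q}_{1,0}(X,d)]^{\mathrm{vir}}$ starts at $d=1$, and since $\AA(q)$, $\mu(q)$, $\log L(q)$ all vanish at $q=0$, both sides vanish at $q=0$ and the integration is unambiguous.

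To secure (\ref{B-va}), I would use the expansion (\ref{xi-for}) and the residue theorem on $S^2$ to compute $\sum_i \xi_n(\alpha_i,q)/(\alpha_k-\alpha_i)$ and $\sum_i \xi_n(\alpha_i,q)/(a_k\alpha_i)$ by converting the sum of residues at $z=\alpha_i$ into residues at $z=\infty$ (and $z=0$). The $\mu(q)\mathbf{x}$ part of $\xi_n$ produces the constants $-\binom{n}{2}\mu(q)$ and $n/a_k$ respectively, while the tail $g_d(\mathbf{x},\alpha)$ vanishes at $\alpha=0$ because $g_d(\mathbf{x},0)=0$. Combined with the identity $\dot\Phi^{(0)}(\alpha_i,q)|_{\alpha=0}=L(q)^{(\ell+1)/2}$ from \cite[(4-10)]{Zin2}, this yields the closed form of $(\sum_i \cB_i)|_{\alpha=0}$.

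For (\ref{A-va}) the pipeline is: invoke Theorem \ref{main1} to replace $\widetilde\cZ_{ii}^*(\hb_1,\hb_2,q)$ inside the double residue with a sum over $s_1+s_2+r=n-1$ of products $\dot\cY^{(s_1)}(\alpha_i,\hb_1,q)\,\ddot\cY^{(s_2)}(\alpha_i,\hb_2,q)$; next apply the vanishing Lemmas \ref{good0} and \ref{good'0} (after expanding the $\cF_n^{(0,0)}$ exponentials via (\ref{xi-for})) to kill every term whose $\alpha$-dependence is not of the form $\alpha_i^{n-1}/\prod_{k\ne i}(\alpha_i-\alpha_k)$; finally identify what remains with $\alpha_i^{n-1}\FF(\alpha_i/\hb_1,\alpha_i/\hb_2,q)/(2(\hb_1+\hb_2)\hb_1\hb_2)$, as prepared in Lemma \ref{good}, and feed it into Popa's residue identity Lemma \ref{good2} to read off $\tfrac12\,q\frac{d}{dq}\AA(q)$.

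The main obstacle is the $\cA$-side: the cancellations that let one replace the full two-point function $\widetilde\cZ_{ii}^*$ by its ``leading'' hypergeometric piece $\FF$ are delicate and rely crucially on the two vanishing lemmas, which in turn depend on the fact that $\xi_n(\mathbf{x},q)-\mu(q)\mathbf{x}$ and $L_n(\mathbf{x},q)-L(q)\mathbf{x}$ both lie in the ideal $\mathcal{J}$ after reduction modulo $\alpha$. Once Lemma \ref{good2} is accepted from \cite{Popa}, the remainder of the proof is a straightforward additive assembly and a termwise integration in $q$, so no further technical input is required.
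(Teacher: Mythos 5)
Your proposal is correct and follows essentially the same route as the paper: Theorem \ref{main_3} is obtained by combining Theorem \ref{The} with the evaluations (\ref{A-va}) and (\ref{B-va}) (themselves derived exactly as you describe, via the residue arguments, Lemmas \ref{good0}--\ref{good}, Zinger's identity for $\dot\Phi^{(0)}$, and Popa's Lemma \ref{good2}) and then integrating in $q$, with the constants of integration fixed since all series vanish at $q=0$.
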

\begin{proof}
The proof is just combining Theorem \ref{The}, (\ref{A-va}) and (\ref{B-va}).
\end{proof}

\bibliographystyle{amsplain}

\end{document}